 \newcommand{\edge}{\ar@{-}}
\numberwithin{equation}{section}
\theoremstyle{plain} \newtheorem{theorem}{Theorem}[section] \newtheorem{lemma}[theorem]{Lemma}
\newtheorem{proposition}[theorem]{Proposition} \newtheorem{corollary}[theorem]{Corollary}
\theoremstyle{definition} \newtheorem{definition}[theorem]{Definition} \newtheorem{example}[theorem]{Example}
\newtheorem{claim}[theorem]{Claim}
\newtheorem{remark}[theorem]{Remark}  \newtheorem*{remark*}{Remark} \newtheorem{question}[theorem]{Question}
\newcommand{\gr}{\operatorname{gr}}
\newcommand{\gnoc}{\mathrel{{\lower.2ex\hbox{$\backsim$}}\llap{\raise.45ex\hbox{=}}}}
\begin{document}

\title[] {A note on the order of the antipode of a pointed Hopf algebra}

\author[Paul Gilmartin]{P. Gilmartin} \address{School of Mathematics and Statistics\\ University of Glasgow\\ Glasgow G12 8QW\\
Scotland.} \email{p.gilmartin.1@research.gla.ac.uk}

\subjclass[2010]{Primary 16T05, 16T15; Secondary 17B37,20G42}

\thanks{Some of these results will form part of the authorÕs PhD thesis at the University of
Glasgow, supported by a Scholarship of the Carnegie Trust. The author would like to thank Ken Brown
for very helpful discussions.}


\maketitle

\begin{abstract} 
Let $k$ be a field and let $H$ denote a pointed Hopf $k$-algebra with antipode $S$. We are interested in determining the order of $S$. Building on the work done by Taft and Wilson in \cite{Taft2}, we define an invariant for $H$, denoted $m_{H}$, and prove that the value of this invariant is connected to the order of $S$. In the case where $\operatorname{char}k=0$, it is shown that if $S$ has finite order then it is either the identity or has order $2m_{H}$. If in addition $H$ is assumed to be coradically graded, it is shown that the order of $S$ is finite if and only if $m_{H}$ is finite. We also consider the case where $\operatorname{char}k=p>0$, generalising the results of \cite{Taft2} to the infinite-dimensional setting.

\end{abstract}

\section{Introduction}\label{intro}


In this paper we are interested in determining the order of the antipode of a pointed Hopf algebra over an arbitrary field $k$. We suspect that the results of this paper are well-known to experts, but their proofs are apparently lacking in the literature.\par
For a pointed Hopf $k$-algebra $H$, we introduce an invariant, denoted $m_{H}$, which is, in a sense which we shall make precise, a  measure of the extent to which a group-like element $x\in H$ commutes with any $h\in H$ such that
$\Delta(h)=h\otimes x+1\otimes h$. Whilst in general $m_{H}$ can take values in $\mathbb{Z}^{\geq 0}\cup\{\infty\}$, the condition that $m_{H}$ is finite is valid in a variety of natural settings, for example whenever $G(H)$ is finite or central in $H$ (see Proposition \ref{central}). We record our main results below (where part (3) appears as \textrm{Theorem \ref{0main}}  and part (4) appears as Corollary \ref{pmain2}). These results connect  the order of the antipode of a pointed Hopf algebra $H$ to the value of $m_{H}$ in both the cases of zero and positive characteristic. For the relevant definitions see $\S\ref{prelim0}$.

\begin{theorem}\label{anti main}
Let $k$ be a field. Suppose $H$ is a pointed Hopf $k$-algebra. Let $\{H_{n}\}_{n\geq 0}$ denote the coradical filtration of $H$ and let $\gr{H}$ denote the associated graded pointed Hopf algebra with respect to the coradical filtration. Let $S$ and $\overline{S}$ denote the antipode of $H$ and $\gr{H}$ respectively.
\begin{enumerate}
\item{
If $m_{H}=\infty$ then $|S|=\infty$.}
\item{
(Taft, Wilson) If $m_{H}<\infty$ then $(S^{2m_{H}}-\operatorname{id})(H_{n})\subseteq H_{n-1}$ for $n\geq 1$.}
\item{
Suppose $\operatorname{char}k=0$.  If $|S|<\infty$ then either $S=\operatorname{id}$ or 
$$|S| = 2m_{H}=2m_{\gr{H}}=|\overline{S}|.$$
}
\item{
Suppose $\operatorname{char}k=p>0$. If $m_{H}<\infty$ and $H = k\langle H_{n}\rangle$ for some $n\geq 0$, then $|S|$ divides  $2m_{H}p^{l}$, where $l\in\mathbb{N}$ is such that $p^{l}\geq n \geq p^{l-1}.$
}
\end{enumerate}

\end{theorem}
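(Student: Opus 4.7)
The plan is to analyze the operator $T := S^{2m_H}$ via its action on the coradical filtration $\{H_n\}$. The starting point is the direct computation that if $h \in H$ satisfies $\Delta(h) = h \otimes x + 1 \otimes h$ for some $x \in G(H)$, then the antipode axiom gives $S(h) = -hx^{-1}$, and hence $S^{2k}(h) = x^k h x^{-k}$ for all $k \geq 1$. Every skew primitive can be twisted by a group-like into this normal form, so by the defining property of $m_H$ the operator $S^{2m_H}$ fixes every element of $H_1$. Combining this with a Taft-Wilson style decomposition of $\gr H$ into skew-primitive components yields part (2): $N := T - \operatorname{id}$ strictly lowers the coradical filtration, and together with $N|_{H_1} = 0$ one obtains the nilpotency bound $N^n|_{H_n} = 0$ for all $n \geq 1$. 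Part (1) is then essentially the contrapositive, since the formula $S^{2k}(h) = x^k h x^{-k}$ shows that $m_H = \infty$ prevents any finite even order of $S$, and the anti-automorphism property of $S$ rules out odd orders.

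For part (3), suppose $|S| = r$ is finite, so that $T^r = \operatorname{id}$. Since $\operatorname{id}$ and $N$ commute as operators, the binomial expansion gives
$$\sum_{k \geq 1}\binom{r}{k} N^k = 0$$
on $H$. I would then induct on $n$: assuming $N|_{H_{n-1}} = 0$, for $v \in H_n$ we have $N^j(v) = 0$ for $j \geq 2$, so the sum collapses to $rN(v) = 0$; in characteristic zero this forces $N(v) = 0$, and hence $N = 0$ on all of $H$, i.e., $|S|$ divides $2m_H$. The reverse divisibility when $S \neq \operatorname{id}$ follows from the formula $S^{2k}(h) = x^k h x^{-k}$ and the minimality built into $m_H$: any smaller even exponent would contradict the definition. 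The coincidence $m_H = m_{\gr H}$ (and hence $|S| = |\overline{S}|$) is obtained by observing that the skew-primitive data controlling $m_H$ is intrinsic to the associated graded structure.

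For part (4), the characteristic-$p$ situation is handled by the Frobenius-like collapse of the same binomial expansion: since $\binom{p}{k} \equiv 0 \pmod{p}$ for $1 \leq k \leq p-1$, induction on $l$ yields
$$T^{p^l} = (\operatorname{id} + N)^{p^l} = \operatorname{id} + N^{p^l}.$$
Choosing $l$ minimal with $p^l \geq n$, the nilpotency bound $N^n|_{H_n} = 0$ forces $T^{p^l}|_{H_n} = \operatorname{id}$. Since $T^{p^l}$ is an algebra automorphism and $H$ is generated by $H_n$ as an algebra, this extends to $T^{p^l} = \operatorname{id}$ on all of $H$, so $|S|$ divides $2m_H p^l$.

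The technical heart of the argument I expect to be part (2), the Taft-Wilson step: cleanly lifting the identity $S^{2m_H} = \operatorname{id}$ on $H_1$ to the filtration statement $N(H_n) \subseteq H_{n-1}$ for all $n$ requires careful bookkeeping of how $\gr H$ is generated from its skew primitives and how $S$ intertwines with the coradical filtration. Once that is in place, parts (3) and (4) become essentially formal consequences of the operator identity $T = \operatorname{id} + N$ with $N$ locally nilpotent, combined with the appropriate form of the binomial theorem in each characteristic.
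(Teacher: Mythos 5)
Your overall architecture tracks the paper's closely: part (1) via the evenness lemma (Lemma \ref{always even}) and the formula $S^{2k}(h)=x^{k}hx^{-k}$ (this is Proposition \ref{infinite order}); part (2) by first showing $S^{2m_{H}}$ fixes $H_{1}$ (Proposition \ref{151}) and then lowering the filtration inductively; part (4) is the paper's argument essentially verbatim (Proposition \ref{pmain} and Corollary \ref{pmain2}, including the algebra-generation step). Your route to the divisibility $|S|\mid 2m_{H}$ in part (3) is genuinely different: the paper shows that a nonzero defect $r$ with $S^{2m_{H}}(h)=h+r$ propagates as $S^{2m_{H}t}(h)=h+tr$ (Claim \ref{claim2}), which in characteristic zero forces $|S|=\infty$; you instead expand $(\operatorname{id}+N)^{r}=\operatorname{id}$ and induct up the filtration so that the binomial sum collapses to $rN(v)=0$. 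Your version is correct --- the base case $N|_{H_{1}}=0$ together with $N(H_{n})\subseteq H_{n-1}$ and the inductive hypothesis $N|_{H_{n-1}}=0$ does kill all terms $N^{k}(v)$ with $k\geq 2$ --- and it has the aesthetic merit of running parts (3) and (4) through a single binomial identity; the paper's orbit computation buys slightly more, namely the explicit conclusion that some element has an infinite $S$-orbit.

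There are, however, two soft spots. Minor one first: you attribute the lifting step in part (2) to ``how $\gr H$ is generated from its skew primitives.'' That is the wrong mechanism --- a pointed Hopf algebra (or its coradical grading) need not be generated by group-likes and skew-primitives --- and the correct tool, quoted in the paper as Proposition \ref{152} (Taft--Wilson), is purely coalgebraic: for any coalgebra endomorphism $\psi$ with $(\psi-\operatorname{id})(H_{j})\subseteq H_{j-1}$ for $j\leq i$, the same holds at $i+1$, with no generation hypothesis at all. The genuine gap is the claim that $m_{H}=m_{\gr H}$ because ``the skew-primitive data controlling $m_{H}$ is intrinsic to the associated graded structure.'' Only one direction is formal: $m_{\gr H}\leq m_{H}$, since by Theorem \ref{diss} the degree-one skew primitives of $\gr H$ are images of skew primitives of $H$ and conjugation descends. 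For the converse, take $q=m_{\gr H}$ and $h\in P_{x,1}(H)$; triviality of the conjugation in $\gr H$ gives only $x^{q}hx^{-q}-h\in P_{x,1}(H)\cap H_{0}=k(x-1)$, say $x^{q}hx^{-q}=h+\lambda(x-1)$, and $\lambda$ is precisely the part invisible in $\gr H$. Iterating yields $x^{qt}hx^{-qt}=h+t\lambda(x-1)$, so killing $\lambda$ requires both $|S|<\infty$ (choose $t$ with $S^{2qt}=\operatorname{id}$) and $\operatorname{char}k=0$ --- exactly the hypotheses under which the paper proves $m_{H}=m_{\gr H}$ in Theorem \ref{final main}, and without which the assertion fails to be formal. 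Since part (3) carries those hypotheses, your argument is patchable along these lines (and $|\overline{S}|=2m_{\gr H}$ then comes from Proposition \ref{coradprop} applied to the coradically graded Hopf algebra $\gr H$), but the ``intrinsic'' assertion as written is not a proof.
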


Theorem \ref{anti main}(2), from which everything else quickly follows, has exactly the same proof as part (1) of the following result of Taft and Wilson from 1974, and so we credit it to them. Parts (3) and (4) of Theorem \ref{anti main} should be compared to the analogous results for $H$ finite-dimensional as presented in part (2) of the result below. It will be clear from the definition that $m_{H}$ divides the exponent of $G(H)$ whenever the exponent is finite.

 \begin{theorem}(Taft, Wilson, \cite{Taft2})\label{TW}
 Let $k$ be a field and let $H$ be a pointed Hopf $k$-algebra. Assume that $G(H)$ has finite exponent $e$.
 \begin{enumerate}
 \item{(\cite[\textrm{Proposition 3, Proposition 4}]{Taft2}) For $n\geq 1$, $(S^{2e}-\operatorname{id})(H_{n})\subseteq H_{n-1}$.}
 \item{(\cite[\textrm{Corollary 6}]{Taft2})
 Assume that $H$ is finite-dimensional and that $H=H_{n}$ for some $n\geq 0$. If $\operatorname{char}k =0$ and $S$ has finite order, then $|S|$ divides $2e$. If $\operatorname{char}k=p>0$, $S^{2ep^{m}}=\operatorname{Id}$, where $p^{m}\geq n > p^{m-1}$.
}
\end{enumerate}
\end{theorem}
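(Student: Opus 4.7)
The plan is to prove part (1) by induction on the coradical filtration, then deduce part (2) by combining the resulting nilpotence of $S^{2e}-\operatorname{id}$ with diagonalizability (in characteristic zero) and a Frobenius-type identity (in characteristic $p$).

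For part (1), I induct on $n \geq 1$. The base case $n=1$ is a direct antipode computation. An element of $H_1$ decomposes into group-likes (on which $S^2=\operatorname{id}$) and $(y,x)$-primitives $h$ with $\Delta(h) = h \otimes x + y \otimes h$. The antipode axiom $\mu(S\otimes\operatorname{id})\Delta(h) = \epsilon(h)\cdot 1 = 0$ forces $S(h) = -y^{-1}hx^{-1}$, and since $S^2$ fixes each group-like while being an algebra automorphism, induction on $k$ yields
\[
S^{2k}(h) \;=\; (xy^{-1})^k\, h\, (x^{-1}y)^k.
\]
Because every element of $G(H)$ has order dividing $e$, setting $k=e$ gives $S^{2e}(h)=h$, so $(S^{2e}-\operatorname{id})(H_1) = 0$. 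For the inductive step I would invoke the Taft--Wilson structure theorem for pointed coalgebras (proved in the same paper \cite{Taft2}): modulo $H_{n-1}$, the space $H_n$ is spanned by elements $c$ with $\Delta(c) = c \otimes g + h \otimes c + w$, where $g,h \in G(H)$ and $w \in H_{n-1}\otimes H_{n-1}$. The antipode axiom applied to such $c$ gives $S(c) = -h^{-1}cg^{-1}$ plus an error arising from $w$; iterating as in the base case, one concludes $S^{2e}(c) \equiv c \pmod{H_{n-1}}$.

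The main obstacle is controlling the error from $w$. Naively, $w \in H_{n-1}\otimes H_{n-1}$ produces contributions in $H_{n-1}\cdot H_{n-1} \subseteq H_{2n-2}$ after applying the antipode and multiplying, which is too coarse to land in $H_{n-1}$. The delicate step is showing that when $S^{2k}$ is computed iteratively, the inductive hypothesis applied to each tensor factor of $w$ (both lying in $H_{n-1}$) allows the cumulative error over $k=1,\dots,e$ to be absorbed into $H_{n-1}$.

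For part (2), combining part (1) with the strengthened base case $(S^{2e}-\operatorname{id})(H_1)=0$ gives $(S^{2e}-\operatorname{id})^n = 0$ on $H = H_n$, so $N := S^{2e}-\operatorname{id}$ is nilpotent. In characteristic zero, if $|S|<\infty$ then $S^{2e}$ has finite order and is diagonalizable over $\kbar$; a diagonalizable operator whose difference with the identity is nilpotent must equal the identity, so $|S|$ divides $2e$. In characteristic $p>0$, the binomial theorem modulo $p$ yields $(T-\operatorname{id})^{p^m} = T^{p^m} - \operatorname{id}$ for any endomorphism $T$. Applied to $T = S^{2e}$, this gives $S^{2ep^m} - \operatorname{id} = N^{p^m}$, which vanishes as soon as $p^m$ is at least the nilpotence index $n$; this is guaranteed by the hypothesis $p^m \geq n > p^{m-1}$.
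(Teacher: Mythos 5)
Your base case for part (1) is correct (the computation $S^{2k}(h)=(xy^{-1})^k h(x^{-1}y)^k$ is right, and since $xy^{-1},x^{-1}y\in G(H)$ have order dividing $e$, taking $k=e$ gives $(S^{2e}-\operatorname{id})(H_1)=0$, matching Proposition \ref{151}), but the inductive step has a genuine gap, and it sits exactly where you flag the ``delicate step'': that step \emph{is} the content of the theorem for $n\geq 2$, and the multiplicative route you sketch does not close. Applying $\mu(S\otimes\operatorname{id})$ to $\Delta(c)=c\otimes g+h\otimes c+w$ produces the error term $\mu(S\otimes\operatorname{id})(w)\in S(H_{n-1})H_{n-1}\subseteq H_{n-1}H_{n-1}$, and since the coradical filtration is only an algebra filtration this is bounded by $H_{2n-2}$, not $H_{n-1}$. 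Worse, under iteration the inductive hypothesis gives you only that $S^{2e}$ fixes $H_{n-1}$ \emph{modulo} $H_{n-2}$, not elementwise, so the ``cumulative error over $k=1,\dots,e$'' is neither confined to $H_{n-1}$ nor stable under further applications of $S^2$; asserting $S^{2e}(c)\equiv c \pmod{H_{n-1}}$ at the end of this bookkeeping is assuming precisely what must be proved. No argument along these multiplicative lines is supplied, and none is known to work.

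The paper (following Taft and Wilson) avoids multiplication entirely at this point: the inductive step is the purely coalgebraic statement \cite[Proposition 4]{Taft2}, reproduced here as Proposition \ref{152}, that for \emph{any} coalgebra endomorphism $\psi$ of a pointed Hopf algebra satisfying $(\psi-\operatorname{id})(H_j)\subseteq H_{j-1}$ for $0\leq j\leq i$, one automatically has $(\psi-\operatorname{id})(H_{i+1})\subseteq H_i$. Since $S^2$, hence $\psi=S^{2e}$, is a coalgebra homomorphism, this lemma together with your base case yields part (1) immediately, with no antipode axiom at higher filtration levels, no products, and no error absorption; the fix for your write-up is to replace your induction with this lemma (or prove it, using only the coalgebra structure and pointedness). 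Your part (2) is then essentially correct and close to the paper's: the nilpotency $(S^{2e}-\operatorname{id})^n=0$ on $H=H_n$ and the characteristic-$p$ identity $(S^{2e}-\operatorname{id})^{p^m}=S^{2ep^m}-\operatorname{id}$ are exactly Proposition \ref{153}(2) and Proposition \ref{pmain}, while your characteristic-zero argument (finite order makes $S^{2e}$ diagonalizable over $\overline{k}$, and diagonalizable plus unipotent forces $S^{2e}=\operatorname{id}$) is a valid alternative to the paper's route in Proposition \ref{0main}, which instead derives $S^{2et}(h)=h+tr$ and lets characteristic zero force $r=0$; both are fine, and yours is arguably slicker in the finite-dimensional setting, though the paper's orbit argument has the advantage of working without any dimension hypothesis.
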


\begin{remark}
It was subsequently proved by Radford, \cite{Radford2}, that the order of the antipode of a finite-dimensional Hopf algebra is always finite, allowing us to drop the assumption that $S$ has finite order in the finite dimensional setting of Theorem \ref{TW}(2).
\end{remark}

As noted in Remark \ref{connected}, the converse of  Theorem \ref{anti main}(1) is in general not true. However, as an almost immediate consequence of 
of Theorem \ref{anti main}(2), in the case where $H$ is known to be coradically graded (see Definition \ref{corad def}), the condition that $m_{H}<\infty$ is equivalent to the condition that $S$ has finite order. That is, we deduce the following, which appears later as Proposition \ref{coradprop}.

\begin{proposition}\label{garfunkel}
Let $k$ be a field and let $H$ be a pointed coradically graded Hopf $k$-algebra.
\begin{enumerate}
\item{$|S|=\infty$ if and only if $m_{H}=\infty$.}
\item{If $m_{H}<\infty$, $S=\operatorname{id}$ or $|S|=2m_{H}$.}
\end{enumerate}
\end{proposition}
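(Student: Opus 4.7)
The plan is to leverage Theorem \ref{anti main}(2) together with the rigidity of the coradical grading. Since $H$ is coradically graded, there is a Hopf algebra grading $H = \bigoplus_{n \geq 0} H(n)$ whose truncations $H_n = \bigoplus_{i \leq n} H(i)$ recover the coradical filtration; in particular, the antipode is a homogeneous map of degree zero, so $S(H(n)) \subseteq H(n)$ for all $n$.

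For part (1), suppose $m_H < \infty$. By Theorem \ref{anti main}(2), $(S^{2m_H} - \mathrm{id})(H_n) \subseteq H_{n-1}$ for $n \geq 1$. For $h \in H(n)$ with $n \geq 1$, homogeneity of $S^{2m_H} - \mathrm{id}$ forces $(S^{2m_H} - \mathrm{id})(h) \in H(n) \cap H_{n-1} = 0$, while on $H(0) = kG(H)$ the identity $S^2 = \mathrm{id}$ holds trivially on group-likes. Hence $S^{2m_H} = \mathrm{id}$ on $H$, so $|S|$ divides $2m_H$. Combined with Theorem \ref{anti main}(1), this gives the equivalence in (1).

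For part (2), granted (1) we already have $|S|$ dividing $2m_H$; the remaining task is to pin down the exact value. The key identity is that for any $(g,1)$-primitive $h$, the antipode formula $S(h) = -hg^{-1}$ gives $S^{2k}(h) = g^k h g^{-k}$. Thus $S^{2k}$ acts as the identity on every such $h$ iff $g^k$ centralises $h$, which by the definition of $m_H$ as the lcm of the obstruction orders happens precisely when $m_H$ divides $k$. Consequently $|S^2|$, which already divides $m_H$ by the argument above, realises the lower bound $m_H$ on the skew-primitive part, so $|S^2| = m_H$.

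It follows that $|S|$ equals either $m_H$ (when odd) or $2m_H$ (when even). The odd case must collapse: if $|S| = 2k+1$, then $S = (S^2)^{-k}$ is both an algebra and an anti-algebra homomorphism, forcing $H$ commutative; but then $S^2 = \mathrm{id}$, so $m_H = 1$ and $|S| = 1$, i.e.\ $S = \mathrm{id}$. The main obstacle is the sharp equality $|S^2| = m_H$: the upper bound is extracted from Theorem \ref{anti main}(2) using the coradical grading, while the matching lower bound is a direct computation on skew primitives that hinges on knowing $m_H$ precisely as the lcm of the individual obstruction orders.
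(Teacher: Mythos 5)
Your proposal is correct and follows essentially the same route as the paper: part (1) is the identical homogeneity argument, intersecting $(S^{2m_H}-\operatorname{id})(H(n))\subseteq H_{n-1}$ with $H(n)$ to force $S^{2m_H}=\operatorname{id}$, and part (2) rests on the same two ingredients the paper uses, namely the formula $S^{2k}(h)=x^{k}hx^{-k}$ on skew-primitives together with the definition of $m_H$ as an lcm, and the reduction of odd antipode order to the commutative case (your inline argument via $S=(S^{2})^{-k}$ is just a mild variant of the paper's Lemma \ref{always even}). Your reorganisation of part (2) as the sharp computation $|S^{2}|=m_H$ followed by a parity analysis, rather than the paper's direct contradiction with a hypothetical $|S|=2q$, $q<m_H$, is logically equivalent and equally valid.
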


\begin{remark}
\begin{enumerate}
\item{
In positive characteristic, there exist examples of pointed Hopf algebras where the antipode has order strictly less than the bound obtained in Theorem \ref{anti main}(4). Take, for example, a field $k$ such that $\operatorname{char}k=p>0$ and let $H$ be a pointed coradically Hopf graded Hopf $k$-algebra with $m_{H}<\infty$ and $H\neq H_{0}$ (see Example \ref{borel} for an explicit example of such a Hopf algebra). By Proposition \ref{garfunkel}, $|S|=2m_{H}$, which is strictly less than the bound obtained in Theorem \ref{anti main}.

On the other hand, there exist examples of pointed Hopf algebras over fields of positive characteristic where the bound obtained in Theorem \ref{anti main} (4) is actually attained. In Example \ref{bad pos} we give an example, originally due to Taft and Wilson, \cite{Taft}, of a finite dimensional connected Hopf algebra $R$ over a field of characteristic $p\geq 3$ with $R=R_{2}$, $m_{R}=1$  and an antipode of order $2p$.
}
\item{
 Note that for an arbitrary pointed Hopf algebra $H$, $m_{H}$ will be in general strictly less than the exponent of $G(H)$ - in Example \ref{borel} we see that, if  $q$ is a primitive $n^{\operatorname{th}}$ root of unity, the pointed coradically Hopf graded Hopf algebra  $H=U_{q}(\mathfrak{b}^{+})$ has the property that $m_{H}=n$ and that $G(H)$ has  infinite exponent.}
 \end{enumerate}
\end{remark}

\section{Preliminaries}\label{prelim0}

Throughout $k$ will denote an arbitrary field (unless otherwise stated). For a Hopf $k$-algebra $H$ the usual notation $\Delta, \epsilon$ and $S$ will denote the coproduct, counit and antipode respectively. By the \emph{order} of the antipode $S$, which we shall denote by $|S|$, we mean the minimal $n$ such that $S^{n}=\operatorname{id}$, the identity map of $H$.\par
The \emph{coradical} filtration of  a Hopf algebra $H$ is denoted $\{H_{n}\}_{n=0}^{\infty}$, where $H_{0}$ is the coradical of $H$ (that is, the sum of  its simple subcoalgebras), and we define inductively, for all $n\geq 1$,
$$H_{n}:=\Delta^{-1}(H\otimes H_{i-1} + H_{0}\otimes H).$$
For a Hopf algebra $H$, an element $g\in H$ is said to be \emph{group-like} if $\Delta(g)=g\otimes g$. It is a simple exercise to prove that the set of all group-like elements of $H$ forms a group, which we shall denote $G(H)$. A Hopf algebra $H$ is said to be \emph{pointed} if $H_{0}=kG(H)$ (or, equivalently, if each simple subcoalgebra is one-dimensional). As proved in \cite[Lemma 5.2.8]{Mont}, for example, if $H$ is pointed then the coradical filtration $\{H_{n}\}_{n=0}^{\infty}$ is in fact a Hopf algebra filtration of $H$ and hence the associated graded space with respect to the coradical filtration, which we denote by $\gr{H}$, inherits the structure of a pointed Hopf algebra from $H$.

\subsection{Defining $m_{H}$}

The aim of this section is to define, for any pointed Hopf algebra $H$, the invariant $m_{H}$ which appears in Theorem \ref{anti main}. We begin by recalling the definition of a skew-primitive element of a pointed Hopf algebra. 

\begin{definition}\label{bigspag}
Suppose $H$ is a pointed Hopf algebra. For any $x, y\in G(H)$, define the space of \emph{$(x, y)$-skew-primitive elements} of $H$, 
\[
P_{x,y}(H):=\left\{h\in H : \Delta(h)=h\otimes x+y\otimes h \right\}.
\]
\end{definition}

\begin{remark}\label{note 1}
As noted in the opening remarks of \cite[\textrm{\S 5.4}]{Mont}, if $H$ is a pointed Hopf algebra and $x, y\in G(H)$, then $P_{x,y}(H)\cap H_{0} = k(x - y)$. For each such pair $x, y\in G(H)$, let $P_{x, y}(H)'$ denote a subspace such that
$$P_{x, y}(H) = k(x-y)\oplus P_{x,y}(H)'.$$
\end{remark}

The following result, which appears as stated below as \cite[\textrm{Theorem 5.4.1}]{Mont}, but is originally due to Taft and Wilson, \cite{Taft2}, is the crux of the proof of our main result, Theorem \ref{anti main}.

\begin{theorem}\label{diss}
Let $H$ be a pointed Hopf algebra. Then
$$H_{1} = kG(H)\oplus(\oplus_{x, y\in G} P_{x,y}(H)').$$
\end{theorem}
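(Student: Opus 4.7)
The plan is to decompose $H_1$ as a bicomodule over the coradical $H_0 = kG(H)$ and to identify the isotypic components with spaces of skew-primitives. Since $H$ is pointed, $H_0$ is cosemisimple with simple subcoalgebras $\{kg : g \in G(H)\}$, and (by a standard argument using cosemisimplicity) the inclusion $H_0 \hookrightarrow H_1$ admits a coalgebra splitting $\pi : H_1 \to H_0$. Via $\pi$, the space $H_1$ becomes an $H_0$-bicomodule with left and right coactions $\rho^L := (\pi \otimes \operatorname{id}) \Delta$ and $\rho^R := (\operatorname{id} \otimes \pi) \Delta$. The standard isotypic decomposition of bicomodules over the cosemisimple coalgebra $kG(H)$ then yields
\[
H_1 = \bigoplus_{x, y \in G(H)} H_1^{y, x}, \qquad H_1^{y, x} := \{h \in H_1 : \rho^L(h) = y \otimes h,\ \rho^R(h) = h \otimes x\}.
\]
Each group-like $g$ satisfies $\rho^L(g) = g \otimes g = \rho^R(g)$, placing $kg \subseteq H_1^{g, g}$ and hence $kG(H) \subseteq \bigoplus_g H_1^{g, g}$ diagonally.

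The crux is to identify each piece $H_1^{y, x}$, up to the obvious $H_0$-contribution, with $(x,y)$-skew-primitives. Fix $h \in H_1^{y, x}$ and use $\Delta(h) \in H_0 \otimes H_1 + H_1 \otimes H_0$ to write $\Delta(h) = \sum_g g \otimes a_g + \sum_g b_g \otimes g$ for some $a_g, b_g \in H_1$. Applying $(\pi \otimes \operatorname{id})$ and comparing with $\rho^L(h) = y \otimes h$ forces $a_y \equiv h \pmod{H_0}$ and $a_g \equiv 0 \pmod{H_0}$ for $g \neq y$; a symmetric argument using $\rho^R(h) = h \otimes x$ forces $b_x \equiv h \pmod{H_0}$ and $b_g \equiv 0 \pmod{H_0}$ for $g \neq x$. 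After absorbing the $H_0 \otimes H_0$ ambiguity and applying the counit to balance the scalar contributions, one obtains
\[
\Delta(h - \pi(h)) = (h - \pi(h)) \otimes x + y \otimes (h - \pi(h)),
\]
so $h - \pi(h) \in P_{x, y}(H)$.

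For $x \neq y$, the diagonal placement of $kG(H)$ inside the isotypic decomposition gives $H_1^{y, x} \cap H_0 = 0$, hence $\pi(h) = 0$ and $H_1^{y, x} \subseteq P_{x, y}(H)$. Moreover $k(x - y) \subseteq H_1^{x,x} \oplus H_1^{y,y}$ is disjoint from $H_1^{y,x}$, so $H_1^{y,x}$ is a legitimate choice of complement $P_{x, y}(H)'$. For $x = y$ one finds $H_1^{x, x} = kx \oplus P_{x, x}(H)$, with $P_{x,x}(H)' = P_{x,x}(H)$ automatic since $k(x-x) = 0$. Collating across all $(x, y)$ yields the desired decomposition for this particular choice of complements; since any two valid choices of $P_{x,y}(H)'$ differ by elements of $k(x - y) \subseteq kG(H)$, the statement $H_1 = kG(H) \oplus \bigoplus_{x,y} P_{x,y}(H)'$ holds for every valid choice.

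The main technical obstacle I anticipate is the coassociativity/counit bookkeeping needed to extract the skew-primitivity of $h - \pi(h)$ from the bicomodule identities: the expression $\Delta(h) = \sum_g g \otimes a_g + \sum_g b_g \otimes g$ is highly non-unique, and the $H_0 \otimes H_0$ ambiguity must be tracked consistently through both the left and right coaction calculations before the desired equation $\Delta(h - \pi(h)) = (h - \pi(h)) \otimes x + y \otimes (h - \pi(h))$ can be pinned down. Once this is handled, the final assembly of the direct sum decomposition is formal.
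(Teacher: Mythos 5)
Your argument has a genuine gap at its very first step: the assertion that the inclusion $H_0 \hookrightarrow H_1$ admits a coalgebra splitting $\pi : H_1 \to H_0$ ``by a standard argument using cosemisimplicity.'' Cosemisimplicity of $H_0 = kG(H)$ gives you semisimplicity of the category of $H_0$-comodules (or bicomodules), but $H_1$ is not an object of that category until \emph{after} you have $\pi$ in hand --- the coproduct only satisfies $\Delta(H_1) \subseteq H_0 \otimes H_1 + H_1 \otimes H_0$, which is not a coaction --- so there is no exact sequence of bicomodules to split, and the claimed standard argument does not typecheck. What you actually need is a splitting in the category of coalgebras, and that is the dual of the Wedderburn--Malcev principal theorem: it holds when the coradical is \emph{coseparable} (which $kG$ is, but which a general cosemisimple coalgebra over an imperfect field need not be), and its proof at this level is essentially the same counit/coassociativity cocycle computation that Taft and Wilson perform. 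Worse, the existence of such a $\pi$ on $H_1$ is essentially equivalent to the theorem you are proving: the kernel of any coalgebra retraction $H_1 \to H_0$ is a coideal complement to $kG(H)$ in $H_1$, and conversely the complement $\oplus_{x,y} P_{x,y}(H)'$ furnished by the theorem is a coideal (each skew-primitive $h$ has $\epsilon(h)=0$ and $\Delta(h) = h \otimes x + y \otimes h \in I \otimes H + H \otimes I$), so many sources \emph{deduce} the splitting from Taft--Wilson. As written, your proof front-loads all of the difficulty into an unproved, potentially circular, premise. The paper itself offers no proof, citing Montgomery [Theorem 5.4.1]; the standard argument there avoids the global splitting by working with the genuine $kG$-bicomodule $H_1/H_0$, decomposing it into isotypic components, lifting representatives, and correcting each lift by an element of $H_0$ via the counit and coassociativity identities.

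It is worth noting that everything \emph{after} the splitting in your sketch is correct, and in fact easier than you anticipate: once $\pi$ exists, $\pi$ is automatically a map of $H_0$-bicomodules, so $\pi(H_1^{y,x}) \subseteq H_0^{y,x} = 0$ for $x \neq y$ and $\pi(H_1^{x,x}) \subseteq kx$; and for $h \in H_1^{y,x}$ with $\pi(h)=0$ the element $D' = \Delta(h) - y \otimes h - h \otimes x$ lies in $H_0 \otimes H_1 + H_1 \otimes H_0$ and is killed by both $(\pi \otimes \operatorname{id})$ and $(\operatorname{id} \otimes \pi)$, which forces $D' = 0$ outright --- no tracking of the $H_0 \otimes H_0$ ambiguity is needed. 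This confirms that the entire mathematical content of the theorem resides in the splitting you assumed. To repair the proof, either establish $\pi$ honestly via the dual Wedderburn--Malcev theorem for coalgebras with coseparable coradical (a locally-finite colimit argument using the conjugacy part of Wedderburn--Malcev on finite-dimensional subcoalgebras), or abandon $\pi$ and run the isotypic decomposition on the quotient $H_1/H_0$ with an explicit lift-and-correct computation, as in the original proof.
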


We also require the following well-known and easy lemma.

\begin{lemma}\label{bigspace}
Let $H$ be a pointed Hopf algebra, let $x\in G(H)$ and let $\langle{x}\rangle$ denote the subgroup of $G(H)$ generated by $x$.
\begin{enumerate}
\item{$P_{x,1}(H)$ is an $\langle x\rangle$-invariant subspace of $H$, where $x$ acts by conjugation.}
\item{
$P_{x,1}(H)\subseteq \operatorname{ker}\epsilon.$}
\end{enumerate}
\end{lemma}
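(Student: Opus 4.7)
The lemma has two parts, both of which are routine consequences of the defining identity $\Delta(h) = h\otimes x + 1\otimes h$ and the Hopf algebra axioms. My plan is to verify each by a direct computation on $\Delta$.

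For part (1), I would compute $\Delta(xhx^{-1})$ using that $\Delta$ is an algebra map and that $x$ is group-like, so $\Delta(x) = x\otimes x$ and $\Delta(x^{-1}) = x^{-1}\otimes x^{-1}$. The calculation
\[
\Delta(xhx^{-1}) = (x\otimes x)\bigl(h\otimes x + 1\otimes h\bigr)(x^{-1}\otimes x^{-1})
\]
expands to $xhx^{-1}\otimes xxx^{-1} + xx^{-1}\otimes xhx^{-1} = xhx^{-1}\otimes x + 1\otimes xhx^{-1}$, showing $xhx^{-1}\in P_{x,1}(H)$. Since $\langle x\rangle$ is generated by $x$ as a group, stability under conjugation by $x$ implies stability under conjugation by $x^{-1}$ as well (alternatively one repeats the same computation with $x^{-1}$), so $P_{x,1}(H)$ is $\langle x\rangle$-invariant.

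For part (2), I would apply $\epsilon\otimes\operatorname{id}$ to both sides of $\Delta(h) = h\otimes x + 1\otimes h$. The counit axiom gives $(\epsilon\otimes\operatorname{id})\Delta(h) = h$, while the right-hand side evaluates to $\epsilon(h)x + h$. Hence $\epsilon(h)x = 0$, and since $x$ is a nonzero element of $H$, we conclude $\epsilon(h) = 0$.

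Neither step presents any real obstacle; the only thing to be careful about is the order of multiplication in $H\otimes H$ when computing $\Delta(xhx^{-1})$, but once written out this is a single line. I would present the proof in exactly that order: conjugation computation first, then the counit application.
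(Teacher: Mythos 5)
Your proposal is correct and matches the paper's proof essentially verbatim: the same one-line computation $\Delta(xhx^{-1})=(x\otimes x)(h\otimes x+1\otimes h)(x^{-1}\otimes x^{-1})$ for part (1), and the counit axiom for part (2), where you apply $\epsilon\otimes\operatorname{id}$ while the paper applies $\operatorname{id}\otimes\epsilon$ --- a symmetric and immaterial difference. Your parenthetical fallback of repeating the computation with $x^{-1}$ is the right safeguard (mere stability under conjugation by $x$ need not formally give stability under $x^{-1}$ when $x$ has infinite order and $P_{x,1}(H)$ is infinite-dimensional), and in fact you are slightly more careful on this point than the paper itself.
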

\begin{proof}
\begin{enumerate}
\item{Let $h\in P_{x,1}(H)$. Since $\Delta$ is an algebra homomorphism,
$$
\begin{aligned}
\Delta(xhx^{-1})&=(x\otimes x)(h\otimes x +1\otimes h)(x^{-1}\otimes x^{-1})\\
&=xhx^{-1}\otimes x+1\otimes xhx^{-1}
\end{aligned}
$$
hence $xhx^{-1}\in P_{x,1}(H)$.}
\item{Let $h\in P_{x,1}(H)$. By the counit axiom of the coproduct, $h\epsilon(x)+\epsilon(h)=h$. Since $x$ is group-like, $\epsilon(x)=1$. The result follows.
}
\end{enumerate}
\end{proof}


In light of Lemma \ref{bigspace} (1), we make the following definition.
 
\begin{definition}
Let $H$ be a pointed Hopf algebra. For any $x\in G(H)$, define
$$a_{x} = |\langle x\rangle :C_{\langle x\rangle}(P_{x,1}(H))|$$
where $C_{\langle x\rangle}(P_{x,1}(H))$ denotes the centraliser of $P_{x,1}(H)$ in $\langle x\rangle$ under the conjugation action by $\langle x\rangle$.
\end{definition}



\begin{definition}
For a pointed Hopf algebra $H$, define
$$m_{H}:=\operatorname{lcm}\{a_{x}:x\in G\}.$$
\end{definition}

We record a couple of simple observations about $m_{H}$.

\begin{proposition}\label{central}
Let $H$ be a pointed Hopf algebra.
\begin{enumerate}
\item{If $G(H)$ is central in $H$ then $m_{H}=1$.}
\item{
Suppose $G(H)$ is finite. Then $m_{H}$ is finite and divides the exponent of the group $G(H)$.}
\item{
If $H=H_{0}=kG(H)$ then $m_{H}=1$.}
\end{enumerate}
\end{proposition}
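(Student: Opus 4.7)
The plan is to compute $a_x$ directly from its definition in each of the three cases and then assemble $m_H$ as the least common multiple of these values.

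For part (1), if $G(H)$ is central in $H$ then each $x\in G(H)$ commutes with every element of $H$; in particular conjugation by any power of $x$ is the identity map on $P_{x,1}(H)$. Hence $C_{\langle x\rangle}(P_{x,1}(H))=\langle x\rangle$, so $a_x=1$. Taking the lcm over all $x\in G(H)$ gives $m_H=1$.

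For part (3), I would first observe that the hypothesis $H=H_{0}$ forces $P_{x,1}(H)\subseteq H_{0}$, so by Remark \ref{note 1} we have
\[
P_{x,1}(H)=P_{x,1}(H)\cap H_{0}=k(x-1).
\]
Since conjugation by any power of $x$ fixes $x-1$, the centraliser equals all of $\langle x\rangle$ and $a_x=1$. Hence $m_H=1$. This is essentially the only step in the proposition that uses anything beyond the bare definition, namely the identification $P_{x,1}(H)\cap H_{0}=k(x-1)$.

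For part (2), the key observation is that $a_x=[\langle x\rangle:C_{\langle x\rangle}(P_{x,1}(H))]$ is the index of a subgroup of the cyclic group $\langle x\rangle$ and so divides $|\langle x\rangle|$. If $G(H)$ has finite exponent $e$, then $|\langle x\rangle|$ divides $e$, whence $a_x\mid e$ for every $x\in G(H)$. Taking the lcm over $G(H)$ yields $m_H\mid e$; in particular $m_H$ is finite. There is no real obstacle — the whole proposition unwinds immediately from the definition of $a_x$ once Remark \ref{note 1} is invoked in part (3).
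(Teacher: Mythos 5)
Your proof is correct and follows essentially the same route as the paper: parts (1) and (2) unwind directly from the definition of $a_{x}$ as an index in the cyclic group $\langle x\rangle$, and part (3) rests on the identification $P_{x,1}(H)=k(x-1)$ from Remark \ref{note 1}. Your write-up merely makes explicit the details the paper dismisses as immediate.
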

\begin{proof}
Parts (1) and (2) are immediate from the way we defined $m_{H}$. For part (3), let $x\in G(H)$. If $H=kG(H)$, then as noted in Remark \ref{note 1},  $P_{x, 1}(H) = k(x - 1)$. It follows that $a_{x}=1$ and hence $m_{H}=1$.
\end{proof}

For an arbitrary pointed Hopf algebra $H$, $m_{H}$ can take values in $\mathbb{Z}^{\geq 0}\cup\{\infty\}$ and will be in general strictly less than the exponent of the group $G(H)$, as shown by the following example.

\begin{example}\label{borel}
Let $k$ be a field and let $0, 1\neq q\in k$. Set $H={U}_{q}(\mathfrak{b}^{+})$, the quantised enveloping algebra of the positive two dimensional  Borel. This is defined as the algebra generated by the letters $E, K$ and $K^{-1}$, subject to the relations $KK^{-1}=1=K^{-1}K$ and 
\[
KE=qEK.
\]
Then, as proved in \cite[\textrm{I.3.4}]{BrownGoodearl}, for example, $H$ becomes a pointed Hopf algebra, with coproduct $\Delta:H\rightarrow H\otimes H$ and antipode $S:H\rightarrow H$ defined on generators as follows
\[
\Delta(E)=E\otimes 1+K\otimes E,\quad  \Delta(K)=K\otimes K
\]
\[
S(K)=K^{-1},\quad  S(E)=-K^{-1}E.
\]
Set $E':= EK^{-1}\in P_{K^{-1},1}(H)$. An elementary calculation shows that $S^{2n}(E')=q^{-n}E'$  and $K^{n}E'=q^{n}E'K^{n}$ for all $n\geq 1$. Since $K, K^{-1}$ and $E'$ form a set of generators of $H$, it is clear that the value of $m_{H}$ depends only on the action of $K$ on $E'$. Thus
\begin{enumerate}
\item{
If $q$ is an $n^{th}$ primitive root of unity for some $1\leq n <\infty$, $G(H)$ has infinite exponent, $m_{H}=n$ and $|S|=2n$.}
\item{
If $q$ is $\emph{not}$ a root of unity, then $G(H)$ has infinite exponent, $m_{H}=\infty$ and $|S|=\infty$.}
\end{enumerate}

\end{example}

\section{Preliminary computations}\label{m sec}

The main results of this section, Proposition \ref{151} and Proposition \ref{153}, along with their proofs, are almost identical to \cite[\textrm{Proposition 3, Theorem 5}]{Taft2}, the only difference being that, for a pointed Hopf algebra $H$,  we state our results in terms of $m_{H}$, instead of the exponent of $G(H)$, and do not restrict ourselves to stating the result for $H$ being  finite-dimensional only.
\\

The results of this section are valid over any field.

\begin{lemma}\label{antipodeform1}
Let $H$ be a pointed Hopf algebra. Let $x\in G(H)$. Then, for $h\in P_{x,1}(H)$,
$S(h)=-hx^{-1}$.
\end{lemma}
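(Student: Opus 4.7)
The plan is to exploit the defining antipode identity $m \circ (S \otimes \operatorname{id}) \circ \Delta = \eta \circ \epsilon$ applied to the skew-primitive element $h$, and read off $S(h)$ directly. This is essentially a one-line computation once the right ingredients are in hand.

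First, I would substitute $\Delta(h) = h \otimes x + 1 \otimes h$ into the antipode axiom. Evaluating $m \circ (S \otimes \operatorname{id})$ on this expression gives
\[
S(h) x + S(1) h = \epsilon(h)\, 1_H.
\]
Next, I would invoke two standard facts: the antipode sends the identity to itself, so $S(1) = 1$; and, by Lemma \ref{bigspace}(2), every element of $P_{x,1}(H)$ lies in $\ker \epsilon$, so $\epsilon(h) = 0$. The displayed equation collapses to $S(h)\,x + h = 0$, and multiplying on the right by $x^{-1}$ (which exists in $H$ because $x \in G(H)$ is group-like, hence invertible with inverse $S(x) = x^{-1}$) yields the claimed formula $S(h) = -h x^{-1}$.

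There is no real obstacle here: the argument is a direct application of the antipode axiom combined with Lemma \ref{bigspace}(2). The only small point worth flagging in the write-up is the justification that $x$ is invertible in $H$, which follows from the group-like assumption, so that the step from $S(h)x = -h$ to $S(h) = -h x^{-1}$ is legitimate.
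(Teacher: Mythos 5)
Your proposal is correct and follows exactly the same route as the paper: apply the antipode axiom $m\circ(S\otimes \operatorname{id})\circ\Delta = \eta\circ\epsilon$ to $h$, use Lemma \ref{bigspace}(2) to kill $\epsilon(h)$, and solve $S(h)x + h = 0$ for $S(h)$. Your extra remark that $x^{-1}$ exists because $x$ is group-like is a fine (if implicit in the paper) point of care, but there is no substantive difference from the paper's argument.
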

\begin{proof}
Let $x\in G(H)$ and choose $h\in P_{x,1}(H)$, so that $\Delta(h)=h\otimes x+1\otimes h$. By Lemma \ref{bigspace}(2), $\epsilon(h)=0$. By the counit axiom of the antipode,
\[
S(h)x+ h=0.
\]
That is, $S(h)=-hx^{-1}$.
\end{proof}

The following lemma is valid over any field.

\begin{lemma}\label{claim1}
Let $H$ be a pointed Hopf algebra. Let  $x\in G(H)$ and let $h\in P_{x,1}(H)$.
\begin{enumerate}
\item{For any $m\geq1$,
$S^{2m}(h)=x^{m}hx^{-m}.$}
\item{If in addition $m_{H}<\infty$,
$S^{2m_{H}}(h)=h.$
}
\end{enumerate}
\end{lemma}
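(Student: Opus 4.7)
The plan is to prove (1) by a straightforward induction on $m$ using Lemma \ref{antipodeform1}, with Lemma \ref{bigspace}(1) providing the crucial closure property needed to iterate; then (2) will drop out of (1) by a direct appeal to the definition of $m_{H}$ together with an elementary fact about cyclic groups.

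For part (1), I would first establish the base case $m=1$. Given $h \in P_{x,1}(H)$, Lemma \ref{antipodeform1} gives $S(h) = -hx^{-1}$, and since $S$ is an antialgebra map with $S(x^{-1}) = x$, applying $S$ once more yields
\[
S^{2}(h) = S(-hx^{-1}) = -S(x^{-1})S(h) = -x(-hx^{-1}) = xhx^{-1}.
\]
By Lemma \ref{bigspace}(1), $xhx^{-1}$ again lies in $P_{x,1}(H)$, so the hypothesis propagates. I would then induct: assuming $S^{2m}(h) = x^{m}hx^{-m} \in P_{x,1}(H)$, the base case applied to this element gives
\[
S^{2(m+1)}(h) = S^{2}(x^{m}hx^{-m}) = x\cdot x^{m}hx^{-m}\cdot x^{-1} = x^{m+1}hx^{-(m+1)}.
\]

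For part (2), I would exploit the cyclic structure of $\langle x\rangle$. Every subgroup of $\langle x\rangle$ of finite index $a_{x}$, whether $\langle x\rangle$ is finite or infinite cyclic, is generated by $x^{a_{x}}$; hence $x^{a_{x}}$ centralises $P_{x,1}(H)$ by definition of $a_{x}$. Since $a_{x} \mid m_{H}$ by construction of $m_{H}$ as an lcm, $x^{m_{H}}$ also centralises $P_{x,1}(H)$, and in particular commutes with $h$. Combining this with part (1) yields
\[
S^{2m_{H}}(h) = x^{m_{H}}hx^{-m_{H}} = h,
\]
as required. The only mildly non-trivial ingredient is the cyclic-subgroup observation in part (2), and even that is a standard fact; the rest is bookkeeping, so I do not anticipate any real obstacle.
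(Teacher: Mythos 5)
Your proposal is correct and follows essentially the same route as the paper: the base computation $S^{2}(h)=xhx^{-1}$ via Lemma \ref{antipodeform1}, induction for part (1), and the definition of $m_{H}$ for part (2). The only difference is that you spell out the details the paper compresses into ``proceeding inductively'' and ``clear from the definition'' --- namely the closure property from Lemma \ref{bigspace}(1) and the fact that the index-$a_{x}$ centraliser in the cyclic group $\langle x\rangle$ is $\langle x^{a_{x}}\rangle$ --- both of which are exactly the right justifications.
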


\begin{proof}

Fix $h\in  P_{x,1}(H)$. By Lemma \ref{antipodeform1}, $S(h)=-hx^{-1}$. This gives
\begin{equation}\label{squared}
S^{2}(h)=-S(x^{-1})S(h)=xhx^{-1}.
\end{equation}
Proceeding inductively, for any $m\geq 1$,
\[
S^{2m}(h)=x^{m}hx^{-m}.
\]
It is then clear from the definition of $m_{H}$ that if $m_{H}<\infty$, $S^{2m_{H}}(h)=h$.

\end{proof}

\begin{proposition}\label{151}
Let $H$ be a pointed Hopf algebra with $m_{H}<\infty$. Then $$(S^{2m_{H}} - \operatorname{Id})(H_{1}) = 0.$$
\end{proposition}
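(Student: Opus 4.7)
The plan is to use the Taft--Wilson decomposition of $H_{1}$ (Theorem \ref{diss}) to split the problem into two easy pieces, and then reduce the generic skew-primitive case to the case already handled by Lemma \ref{claim1}.

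By Theorem \ref{diss}, $H_{1} = kG(H) \oplus \bigoplus_{x,y \in G(H)} P_{x,y}(H)'$. First I would check that $S^{2m_{H}}$ fixes $kG(H)$: for any group-like $g$ the antipode axiom gives $S(g)=g^{-1}$, so $S^{2}(g)=g$ and hence $S^{2m_{H}}(g)=g$. So the issue is entirely on each summand $P_{x,y}(H)'$, and it suffices to show $S^{2m_{H}}$ acts as the identity on the larger space $P_{x,y}(H)$ for every pair $x,y \in G(H)$.

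The key idea is the following twist: if $h \in P_{x,y}(H)$, then a direct computation using that $\Delta$ is an algebra map gives
\[
\Delta(y^{-1}h) = (y^{-1}\otimes y^{-1})(h\otimes x + y\otimes h) = y^{-1}h \otimes y^{-1}x + 1 \otimes y^{-1}h,
\]
so $y^{-1}h \in P_{y^{-1}x,\,1}(H)$. Since $m_{H}$ is the lcm of $\{a_{z} : z \in G(H)\}$, in particular $a_{y^{-1}x}$ divides $m_{H}$, so Lemma \ref{claim1}(2) applies to $y^{-1}h$ to give $S^{2m_{H}}(y^{-1}h)=y^{-1}h$. Now $S^{2m_{H}}$ is an algebra homomorphism (as an even power of the anti-homomorphism $S$), and it fixes $y^{-1}$ by the group-like case above, so
\[
y^{-1}h = S^{2m_{H}}(y^{-1}h) = S^{2m_{H}}(y^{-1})\,S^{2m_{H}}(h) = y^{-1}S^{2m_{H}}(h).
\]
Multiplying by $y$ on the left yields $S^{2m_{H}}(h)=h$, which finishes the argument.

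There is no serious obstacle; the only thing to get right is that $m_{H}$ was defined precisely so that $a_{z} \mid m_{H}$ for every $z \in G(H)$, which is what allows Lemma \ref{claim1}(2) to be applied at $z = y^{-1}x$ rather than only at $z=x$. Everything else is a short calculation.
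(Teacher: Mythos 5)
Your proof is correct and takes essentially the same route as the paper: both decompose $H_{1}$ via Theorem \ref{diss}, reduce a general $(x,y)$-skew-primitive to a $(z,1)$-skew-primitive by multiplying by $y^{-1}$ (you multiply on the left, obtaining $y^{-1}h\in P_{y^{-1}x,1}(H)$, while the paper multiplies on the right, obtaining $hy^{-1}\in P_{xy^{-1},1}(H)$ --- an immaterial difference), and then conclude via Lemma \ref{claim1}(2) together with the fact that $S^{2m_{H}}$ is an algebra morphism fixing group-likes.
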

\begin{proof}
Let $h\in H_{1}$. If $h\in H_{0}$ then $S^{2}(h)=h$ since $H_{0}=kG(H)$ and $S^{2}|_{kG(H)}=\operatorname{id}$. Thus  by Theorem \ref{diss} and linearity, we can without loss of generality assume that $h\in P_{x, y}(H)$ for some $x, y\in G$. Using the fact that $\Delta$ is an algebra homomorphism, an elementary calculation then shows that $hy^{-1}\in P_{xy^{-1}, 1}(H)$. Then, using Lemma \ref{claim1}(2) and the fact that $S^{2m_{H}}$ is an algebra morphism,
$$hy^{-1}= S^{2m_{H}}(hy^{-1}) = S^{2m_{H}}(h)S^{2m_{H}}(y^{-1})=S^{2m_{H}}(h)y^{-1},$$ 
giving $S^{2m_{H}}(h) = h$, as required.

\end{proof}

Next we shall prove a sufficient condition for the antipode of a pointed Hopf algebra to have infinite order. Before we do this, we need the following well-known lemma.

\begin{lemma}\label{always even}
Let $H$ be a Hopf algebra with $|S|<\infty$.  Then either $S=\operatorname{id}$ or $|S|=2k$ for some $k\in\mathbb{N}$.
\end{lemma}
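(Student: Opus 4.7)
The plan is to prove the contrapositive of the nontrivial case: if $|S| = n$ is finite and \emph{odd}, then $S = \operatorname{id}$. The key mechanism is that odd order will force $S$ to behave simultaneously as a homomorphism and as an anti-homomorphism, which will force $H$ to be commutative and cocommutative, which in turn forces $S^{2} = \operatorname{id}$.

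First I would use that $S^{2}$, as the composition of two anti-algebra (and anti-coalgebra) maps, is an algebra and coalgebra endomorphism of $H$, hence a bialgebra endomorphism. Since $\gcd(2,n)=1$, choose integers $a,b$ with $2a + nb = 1$; then
\[
S \;=\; S^{2a+nb} \;=\; (S^{2})^{a}\cdot (S^{n})^{b} \;=\; (S^{2})^{a},
\]
so $S$ itself is an algebra (and coalgebra) homomorphism.

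Next, combine this with the defining property that $S$ is an anti-algebra homomorphism. For all $x,y\in H$,
\[
S(x)S(y) \;=\; S(xy) \;=\; S(y)S(x).
\]
Since $S^{n}=\operatorname{id}$ forces $S$ to be bijective, as $x,y$ range over $H$ the elements $S(x),S(y)$ do too, so $H$ is commutative. The same reasoning applied to comultiplication (using that $S$ is both coalgebra and anti-coalgebra) shows that $H$ is cocommutative. I would then invoke the classical fact that any commutative or cocommutative Hopf algebra satisfies $S^{2} = \operatorname{id}$, which means $|S|$ divides $2$; together with $n$ odd this yields $n=1$, i.e.\ $S=\operatorname{id}$.

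There is no real obstacle here: the only points requiring any care are (i) checking that $S^{2}$ really is a bialgebra endomorphism (immediate from $S$ being an anti-bialgebra map), and (ii) using the bijectivity of $S$ to promote commutativity of the image of $S$ to commutativity of all of $H$. Both are standard and do not require anything beyond the hypothesis $|S|<\infty$.
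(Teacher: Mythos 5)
Your proof is correct, but it is organised differently from the paper's. The paper splits into cases: if $H$ is commutative it quotes the classical fact $S^{2}=\operatorname{id}$ (so $|S|\le 2$), and if $H$ is noncommutative it picks $x,y$ with $xy\neq yx$, notes that an odd power $S^{2q+1}$ of the anti-algebra map $S$ is again an anti-algebra map, and gets the contradiction $xy=S^{2q+1}(xy)=S^{2q+1}(y)S^{2q+1}(x)=yx$ directly from $S^{2q+1}=\operatorname{id}$ --- no bijectivity of $S$ and no Bezout identity needed. You instead exploit the group-theoretic fact that when $|S|=n$ is odd, $S$ lies in the cyclic group generated by $S^{2}$, so $S=(S^{2})^{a}$ is itself a bialgebra endomorphism; combined with its anti-multiplicativity and bijectivity (here you do need $S$ invertible, which finite order supplies), this forces $H$ commutative, and then the same classical fact $S^{2}=\operatorname{id}$ gives $n\mid 2$, hence $n=1$. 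Your route avoids the case split and yields the stronger structural conclusion that odd order forces $H$ commutative (and cocommutative, though that half is redundant --- commutativity alone suffices for the classical fact); the paper's route is more elementary, since evaluating $S^{m}=\operatorname{id}$ on a single noncommuting pair sidesteps both the Bezout manipulation and the surjectivity argument needed to promote commutativity of the image of $S$ to all of $H$. Both proofs ultimately rest on the same two ingredients --- parity of powers of an anti-homomorphism and \cite[Corollary 1.5.12]{Mont} --- just deployed in different orders.
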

\begin{proof}
Suppose $S\neq{id}$. If $H$ is commutative, $|S|=2$ by \cite[\textrm{Corollary 1.5.12}]{Mont}, so without loss of generality assume that $H$ is noncommutative. Choose $x, y\in H$ such that $xy\neq yx$. Let $|S|=m<\infty$. Suppose $m$ is odd - write $m=2q+1$ for some $q\geq 1$. Since $S$ is an anti-algebra morphism, so too is $S^{2q+1}$, hence
$$xy= S^{2q+1}(xy)=S^{2q+1}(y)S^{2q+1}(x)=yx.$$
This contradicts the assumption that $x$ and $y$ do not commute, thus $S$ must have even order.

\end{proof}

\begin{proposition}\label{infinite order}
Let $H$ be a pointed Hopf algebra with $m_{H} = \infty$. Then $|S|=\infty$.
\end{proposition}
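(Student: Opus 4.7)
The natural approach is to prove the contrapositive: assume $|S| < \infty$ and deduce $m_{H} < \infty$. By Lemma \ref{always even}, either $S = \operatorname{id}$ (so $S^{2} = \operatorname{id}$) or $|S| = 2k$ for some $k \geq 1$. In either case there is a positive integer $k$ with $S^{2k} = \operatorname{id}$, and it suffices to show that $m_{H}$ divides such a $k$.

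The key leverage is Lemma \ref{claim1}(1), which for any $x \in G(H)$ and $h \in P_{x,1}(H)$ reads
\[
S^{2k}(h) = x^{k} h x^{-k}.
\]
Under the standing assumption $S^{2k} = \operatorname{id}$ this forces $x^{k} h x^{-k} = h$ for every $h \in P_{x,1}(H)$, i.e.\ $x^{k} \in C_{\langle x\rangle}(P_{x,1}(H))$. So for every group-like $x$, the centraliser of $P_{x,1}(H)$ inside $\langle x\rangle$ contains the cyclic subgroup $\langle x^{k}\rangle$.

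Now I invoke a small piece of cyclic-group arithmetic: if $\langle x\rangle$ is cyclic and a subgroup $C$ contains $x^{k}$, then $[\langle x\rangle : C]$ divides $[\langle x\rangle : \langle x^{k}\rangle]$, and the latter index is either $k$ (when $x$ has infinite order) or $\gcd(|x|, k)$ (when $|x| < \infty$); in both cases it divides $k$. Applied to $C = C_{\langle x\rangle}(P_{x,1}(H))$ this gives $a_{x} \mid k$ for every $x \in G(H)$, and therefore $m_{H} = \operatorname{lcm}\{a_{x}\}$ divides $k$. In particular $m_{H} < \infty$, completing the contrapositive.

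There is no real obstacle: the argument is essentially a direct unpacking of the definition of $m_{H}$ against Lemma \ref{claim1}(1). The only point that requires a line of justification is the cyclic-group index claim above, which I would state explicitly (without proof) so that the reader immediately sees why $a_{x}$ is bounded by $k$ rather than by, say, the order of $x$.
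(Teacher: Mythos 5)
Your proof is correct and follows essentially the same route as the paper: the contrapositive via Lemma \ref{always even} and Lemma \ref{claim1}(1), concluding that $x^{k}$ centralises $P_{x,1}(H)$ for every group-like $x$. The only difference is that you make explicit the cyclic-group index argument showing $a_{x}\mid k$ (and hence the slightly stronger conclusion $m_{H}\mid k$), a step the paper leaves implicit when it asserts that $m_{H}=\infty$ would produce some $y$ and $f\in P_{y,1}(H)$ with $y^{t}fy^{-t}\neq f$.
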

\begin{proof}
Suppose $|S|<\infty$. By Lemma \ref{always even}, we can write $|S| = 2t$ for some $t\in\mathbb{N}$. Then by  Lemma \ref{claim1}(1), for any $x\in G(H)$ and $h\in P_{x,1}(H)$,
$$h = S^{2t}(h) = x^{t}hx^{-t}.$$
If $m_{H}=\infty$, there exists some $y\in G(H)$ and $f\in P_{y,1}(H)$ such that $y^{t}fy^{-t}\neq f$. Thus it must be that $m_{H}<\infty$.
\end{proof}

Following the arguments of \cite{Taft2}, the following result allows us to extend Proposition \ref{151} to higher terms in the coradical filtration.

\begin{proposition}\label{152}
Let $H$ be a pointed Hopf algebra,  let $i\geq 1$ and let $\psi:H\rightarrow H$ be a coalgebra homomorphism. Suppose $(\psi  - \operatorname{id})(H_{j})\subseteq H_{j-1}$ for all $0\leq j\leq i$. Then $(\psi - \operatorname{id})(H_{i+1})\subseteq H_{i}$.
\end{proposition}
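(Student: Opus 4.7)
The plan is to show $(\psi - \operatorname{id})(h) \in H_i$ for $h \in H_{i+1}$ by verifying the defining condition $\Delta(\phi(h)) \in H \otimes H_{i-1} + H_0 \otimes H$, where I set $\phi := \psi - \operatorname{id}$. The governing identity, immediate from $\psi = \operatorname{id} + \phi$ together with $\psi$ being a coalgebra map, is
\[
\Delta \circ \phi \;=\; (\psi \otimes \psi - \operatorname{id} \otimes \operatorname{id}) \circ \Delta \;=\; (\phi \otimes \psi + \operatorname{id} \otimes \phi) \circ \Delta.
\]

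I would record two preliminaries first. The $j = 0$ hypothesis gives $\phi(H_0) = 0$, i.e.\ $\psi$ fixes $H_0$ pointwise, and then an easy induction on $n$ using $(\psi \otimes \psi) \circ \Delta = \Delta \circ \psi$ yields $\psi(H_n) \subseteq H_n$ for every $n$. Secondly, since $H$ is pointed (indeed for any coalgebra) the coradical filtration is a coalgebra filtration, so $\Delta(H_{i+1}) \subseteq \sum_{a+b=i+1} H_a \otimes H_b$. With this in hand I would write $\Delta(h) = \sum_{a+b = i+1} u_{a,b}$ with $u_{a,b} \in H_a \otimes H_b$ and apply $\phi \otimes \psi + \operatorname{id} \otimes \phi$ summand by summand.

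What remains is a short case analysis on $(a,b)$. For a pure tensor $x \otimes y \in H_a \otimes H_b$ with $1 \leq a, b \leq i$, the piece $x \otimes \phi(y)$ lies in $H \otimes H_{b-1} \subseteq H \otimes H_{i-1}$, while $\phi(x) \otimes \psi(y) \in H_{a-1} \otimes H_b$ lies in $H_0 \otimes H$ when $a = 1$ and in $H \otimes H_{i-1}$ when $a \geq 2$ (using $b = i+1-a \leq i-1$ together with $\psi(H_b) \subseteq H_b$). The boundary cases $(0, i+1)$ and $(i+1, 0)$ follow at once from $\phi(H_0) = 0$ combined with $H_0 \subseteq H_{i-1}$, the latter requiring the hypothesis $i \geq 1$.

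The one point that needs care, and what I expect to be the mild obstacle, is exactly the case $a \geq 2$. The weaker filtration statement $\Delta(H_{i+1}) \subseteq H \otimes H_i + H_0 \otimes H$ is not sufficient on its own, because applying $\phi$ to the left factor produces an element of $H$ with no controlled filtration degree; one genuinely needs the refined decomposition $\Delta(H_{i+1}) \subseteq \sum_{a+b=i+1} H_a \otimes H_b$ together with $\psi$'s preservation of the filtration in order to land those terms in $H \otimes H_{i-1}$.
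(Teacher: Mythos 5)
Your proof is correct, but there is nothing in the paper to compare it against line by line: the paper's entire ``proof'' of this proposition is the citation ``This is \cite[Proposition 4]{Taft2}''. What you have written is, in effect, a faithful reconstruction of that classical Taft--Wilson argument, and every step checks out: the operator identity $\Delta\circ\phi=(\phi\otimes\psi+\operatorname{id}\otimes\phi)\circ\Delta$ for $\phi=\psi-\operatorname{id}$; the consequence $\phi(H_0)=0$ of the $j=0$ hypothesis (note this silently uses the convention $H_{-1}=0$, which the paper also leaves implicit); the induction giving $\psi(H_n)\subseteq H_n$ for all $n$; and the use of the full coalgebra-filtration property $\Delta(H_{i+1})\subseteq\sum_{a+b=i+1}H_a\otimes H_b$ followed by the case analysis, in which the boundary terms $(0,i+1)$ and $(i+1,0)$ are correctly killed by $\phi(H_0)=0$ and absorbed via $H_0\subseteq H_{i-1}$ (your flag that this is where $i\geq 1$ enters is accurate). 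Your closing observation is also the genuinely delicate point: the coarser statement $\Delta(H_{i+1})\subseteq H\otimes H_i+H_0\otimes H$ would only give $\Delta(\phi(h))\in H\otimes H_i+H_0\otimes H$, i.e.\ the vacuous conclusion $\phi(H_{i+1})\subseteq H_{i+1}$, so the refined decomposition together with $\psi$ preserving the filtration is indispensable. The trade-off between the two routes is simply self-containedness: the paper's citation keeps its exposition short, while your version makes explicit that Taft--Wilson's Proposition 4 uses no finite-dimensionality whatsoever --- which is precisely the point the surrounding section of the paper is making.
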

\begin{proof}
This is \cite[\textrm{Proposition 4}]{Taft2}.
\end{proof}

\begin{proposition}\label{153}
Let $H$ be a pointed Hopf algebra with $m_{H}<\infty$. Then, for any $n\geq 1$,
\begin{enumerate}
\item{$(S^{2m_{H}} - \operatorname{id})(H_{n})\subseteq H_{n-1}$.
}
\item{
$(S^{2m_{H}}-\operatorname{id})^{n}(H_{n}) = 0$.}
\end{enumerate}
\begin{proof}
This is immediate from Proposition \ref{151}, Proposition \ref{152} and the fact that $S^{2m_{H}}$ is a coalgebra morphism.
\end{proof}
\end{proposition}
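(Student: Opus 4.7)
The plan is to derive both parts from the ingredients the author has assembled, using induction driven by Proposition \ref{152}.

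For part (1), I would proceed by induction on $n$. The base case $n=1$ is exactly Proposition \ref{151}, since $(S^{2m_H}-\operatorname{id})(H_1)=0\subseteq H_0$. For the inductive step, assume $(S^{2m_H}-\operatorname{id})(H_j)\subseteq H_{j-1}$ for all $0\le j\le n$; noting also that $(S^{2m_H}-\operatorname{id})(H_0)=0$ because $S^2$ fixes $kG(H)=H_0$. Then apply Proposition \ref{152} with $\psi:=S^{2m_H}$ to conclude $(S^{2m_H}-\operatorname{id})(H_{n+1})\subseteq H_n$. This uses that $\psi$ is a coalgebra homomorphism, which holds because $S$ is an anti-coalgebra map and therefore any even power of $S$ is an honest coalgebra map.

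For part (2), I would iterate part (1). Since $(S^{2m_H}-\operatorname{id})$ sends $H_j$ into $H_{j-1}$ for every $j\ge 1$, applying it $n$ times to $H_n$ yields
\[
(S^{2m_H}-\operatorname{id})^n(H_n)\ \subseteq\ (S^{2m_H}-\operatorname{id})^{n-1}(H_{n-1})\ \subseteq\ \cdots\ \subseteq\ (S^{2m_H}-\operatorname{id})(H_1)\ =\ 0,
\]
where the final equality is Proposition \ref{151}.

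I do not expect any genuine obstacle here: the only point that needs a brief justification is that $S^{2m_H}$ is a coalgebra morphism (so that Proposition \ref{152} applies), and this is a standard consequence of $S$ being an anti-coalgebra morphism. Everything else is a direct invocation of the previously established results, exactly as the author indicates.
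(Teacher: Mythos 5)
Your proposal is correct and is precisely the argument the paper compresses into its one-line proof: induction via Proposition \ref{151} (base case) and Proposition \ref{152} (inductive step, applicable since $S^{2m_H}$ is a coalgebra morphism), with part (2) obtained by iterating part (1). Your added details --- checking $(S^{2m_H}-\operatorname{id})(H_0)=0$ and justifying that even powers of $S$ are coalgebra maps --- are exactly what the paper's ``immediate'' elides.
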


\subsection{Coradically graded Hopf algebras}

\begin{definition}\label{corad def}
Let $H$ be a Hopf algebra. We say a family of subspaces $\{H(n)\}_{n\geq 0}$ is a \emph{Hopf algebra grading} of $H$  if $\{H(n)\}_{n\geq 0}$ is both an algebra and coalgebra grading with the additional property that $S(H(n))\subseteq H(n)$ for all $n\geq 0$. If in addition we have, for each $n\geq 0$, that
$$H_{n}=\bigoplus_{i=0}^{n}H(i)$$
we say $H$ is a \emph{coradically graded Hopf algebra}.
\end{definition}

\begin{proposition}\label{coradprop}
Let $H$ be a pointed coradically graded Hopf algebra. Then 
\begin{enumerate}
\item{$m_{H}=\infty$ if and only if $|S|=\infty$.}
\item{If $m_{H}<\infty$, $S=\operatorname{id}$ or $|S|=2m_{H}$.}
\end{enumerate}
\end{proposition}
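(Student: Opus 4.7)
The plan is to leverage the coradical grading to upgrade the filtration statement of Proposition \ref{153}(1) into an equality. For part (1), one direction—$m_H = \infty \Rightarrow |S| = \infty$—is precisely Proposition \ref{infinite order}. For the converse, suppose $m_H < \infty$; I will show $S^{2m_H} = \operatorname{id}$. Take a homogeneous $h \in H(n)$. Since $S$ preserves the Hopf grading by assumption, so does $S^{2m_H}$, whence $(S^{2m_H} - \operatorname{id})(h) \in H(n)$. On the other hand $h \in H_n$, so Proposition \ref{153}(1) places $(S^{2m_H} - \operatorname{id})(h) \in H_{n-1} = \bigoplus_{i < n} H(i)$. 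Directness of $\bigoplus_{i\ge 0} H(i)$ gives $H(n) \cap H_{n-1} = 0$, so $(S^{2m_H} - \operatorname{id})(h) = 0$. Running over all homogeneous components, $S^{2m_H} = \operatorname{id}$, so $|S|$ is finite and divides $2m_H$.

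For part (2), assume $m_H < \infty$ and $S \neq \operatorname{id}$. By Lemma \ref{always even}, $|S| = 2t$ for some $t \in \mathbb{N}$; by the previous paragraph $t \mid m_H$. To establish the reverse divisibility, apply Lemma \ref{claim1}(1): for every $x \in G(H)$ and every $h \in P_{x,1}(H)$,
\[
h = S^{2t}(h) = x^{t} h x^{-t},
\]
so $x^{t} \in C_{\langle x\rangle}(P_{x,1}(H))$. The centraliser is a subgroup of the cyclic group $\langle x\rangle$, and—because $m_H < \infty$ forces $a_x < \infty$—it is generated by $x^{a_x}$. Membership of $x^t$ therefore forces $a_x \mid t$ for every $x \in G(H)$. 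Taking the least common multiple over $G(H)$ yields $m_H \mid t$, and combined with $t \mid m_H$ we conclude $t = m_H$, i.e.\ $|S| = 2m_H$.

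The conceptual core is the short collapse argument in part (1): the coradical grading is exactly what is needed to convert the filtration-level statement $(S^{2m_H}-\operatorname{id})(H_n) \subseteq H_{n-1}$ into the pointwise equality $S^{2m_H}=\operatorname{id}$. The rest is bookkeeping with cyclic subgroups. The only place one has to be careful is the cyclic-subgroup step at the end of part (2), where one must use $m_H < \infty$ (hence each $a_x < \infty$) to identify $C_{\langle x\rangle}(P_{x,1}(H))$ with $\langle x^{a_x}\rangle$ in both the finite and infinite cyclic cases.
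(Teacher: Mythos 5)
Your proposal is correct and follows essentially the same route as the paper: part (1) uses the grading to collapse the containment $(S^{2m_H}-\operatorname{id})(H(n))\subseteq H(n)\cap H_{n-1}=0$ from Proposition \ref{153}(1) together with Proposition \ref{infinite order}, and part (2) uses Lemma \ref{always even} and Lemma \ref{claim1}(1) against the definition of $m_H$. Your only departure is cosmetic but welcome: where the paper argues by contradiction that some $q<m_H$ cannot be the half-order, you derive the divisibility $m_H\mid t$ directly by identifying $C_{\langle x\rangle}(P_{x,1}(H))$ with $\langle x^{a_x}\rangle$, making explicit the cyclic-subgroup fact the paper leaves implicit.
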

\begin{proof}
If $m_{H}=\infty$, $|S|=\infty$ by Proposition \ref{infinite order}. For the converse,  let $H=\bigoplus_{i=0}^{\infty}H(i)$ be a pointed coradically  graded Hopf algebra, so that, for $n\geq 1$, $H_{n}=\bigoplus_{i=0}^{n}H(i)$, and let $m_{H}<\infty$.  Fix $n\geq 1$. By Proposition \ref{153}, it follows that
$$(S^{2m_{H}}-\operatorname{id})(H(n))\subseteq \bigoplus_{j=0}^{n-1}H(j-1).$$
However, since $\{H(n)\}_{n}$ is a Hopf grading, $(S^{2m_{H}}-\operatorname{id})(H(n))\subseteq H(n)$ for each $n\geq 0$. It must therefore be that $(S^{2m_{H}}-\operatorname{id})(H(n)) = 0$ for $n\geq 0$, hence $S^{2m_{H}}=\operatorname{id}$. To complete the proof, it suffices to prove that for any $q<m_{H}$, there exists $h\in H$ such that $S^{2q}(h)\neq h$, since Lemma \ref{always even} guarantees that the order of the antipode is always either $1$ or divisible by $2$. Suppose for a contradiction that there exists some $q<m_{H}$ such that $|S| = 2q$. By Lemma \ref{claim1}(1), for any $x\in G(H)$, $h\in P_{x,1}(H)$,
$$h  = S^{2q}(h) = x^{q}hx^{-q}.$$
However, since $q<m_{H}$, by definition there exists some $y\in G$ and $f\in P_{y,1}(H)$ such that
$$f \neq y^{q}fy^{-q},$$
a contradiction. This completes the proof.

\end{proof}

Let $H$ be a pointed Hopf algebra. As is mentioned at the beginning of $\S\ref{prelim0}$, the associated graded space with respect to the coradical filtration of $H$, $\gr{H}$, inherits the structure of a pointed Hopf algebra from $H$. It is well-known, and proved in \cite[Proposition 4.4.15]{Radford}, for example, that, with respect to the Hopf structure inherited from $H$, $\gr{H}$ becomes a pointed coradically graded Hopf algebra. The following is thus an immediate corollary of Proposition \ref{coradprop}.

\begin{corollary}\label{assoco}
Let $H$ be a pointed Hopf algebra with $m_{H}<\infty$. Then $m_{\gr H}<\infty$ and either $S=\operatorname{id}$ or  $|S_{\gr H}|=2m_{\gr H}$.
\end{corollary}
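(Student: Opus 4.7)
The strategy is to reduce the statement to a direct application of Proposition \ref{coradprop} to $\gr H$, which is permissible because $\gr H$ is pointed and coradically graded by the result cited immediately before the corollary. The substance of the argument is to show that $m_{\gr H} = m_H$; this both yields $m_{\gr H} < \infty$ and produces the stated bound on the order of the antipode of $\gr H$ via Proposition \ref{coradprop}.

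To compare $m_H$ and $m_{\gr H}$, I would first observe that $G(\gr H) = G(H)$, since the degree-zero piece of $\gr H$ is $H_0 = kG(H)$, so the same index set of group-likes appears in the definitions of $m_H$ and $m_{\gr H}$. For each pair $x, y \in G(H)$, applying Theorem \ref{diss} to $H$ gives $H_1 = kG(H) \oplus \bigoplus_{x,y} P_{x,y}(H)'$, and applying it to $\gr H$ gives $(\gr H)_1 = kG(H) \oplus \bigoplus_{x,y} P_{x,y}(\gr H)'$; since $\gr H$ is coradically graded, the second summand here is precisely the degree-one piece $H_1/H_0$. The canonical quotient map $P_{x,y}(H) \to H_1/H_0$ has kernel $k(x-y)$ by Remark \ref{note 1}, and comparing the two decompositions shows that it induces a linear isomorphism $P_{x,y}(H)/k(x-y) \cong P_{x,y}(\gr H)/k(x-y)$ on each $(x,y)$-summand.

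For $y = 1$ this isomorphism is $\langle x \rangle$-equivariant for the conjugation action, because $x \in H_0$ acts compatibly on $H_1$ and on the quotient $H_1/H_0$. Since conjugation by $x$ acts trivially on the $k(x-1)$ summand of $P_{x,1}(H)$, the centraliser in $\langle x \rangle$ of $P_{x,1}(H)$ equals the centraliser of $P_{x,1}(H)'$, and likewise in $\gr H$. Hence $a_x$ takes the same value in $H$ and in $\gr H$ for every $x \in G(H)$, giving $m_H = m_{\gr H}$. Proposition \ref{coradprop} applied to $\gr H$ then completes the proof. The main technical point is the identification of the skew-primitive spaces modulo group-likes between $H$ and $\gr H$ in an $\langle x \rangle$-equivariant fashion; this is essentially encoded in how the coradical filtration passes to the associated graded, but the bookkeeping of the conjugation action deserves care.
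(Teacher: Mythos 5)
Your overall reduction is the paper's route: cite that $\gr H$ is pointed and coradically graded, check $m_{\gr H}<\infty$, and apply Proposition \ref{coradprop} to $\gr H$. Your preparatory steps are also sound: $G(\gr H)=G(H)$ because $(\gr H)_0=H_0=kG(H)$, and the quotient map does induce an $\langle x\rangle$-equivariant isomorphism from $P_{x,1}(H)/k(x-1)$ onto the degree-one part of $P_{x,1}(\gr H)$ (the surjectivity, which you pass over quickly, follows from Theorem \ref{diss} by a direct-sum bookkeeping argument). The genuine gap is the final centraliser step. From the equivariant isomorphism you may conclude only that $C_{\langle x\rangle}(P_{x,1}(H))\subseteq C_{\langle x\rangle}(P_{x,1}(\gr H))$, i.e.\ that $a_x$ computed in $\gr H$ \emph{divides} $a_x$ computed in $H$ -- not that they are equal. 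The point is that $x^j$ can act nontrivially on $P_{x,1}(H)$ while acting trivially modulo $k(x-1)$: conjugation fixes the line $k(x-1)$ pointwise, so with respect to $k(x-1)\oplus P_{x,1}(H)'$ it is block upper triangular, and the off-diagonal ``shear'' into $k(x-1)$ is invisible in the quotient. Your sentence asserting that the centraliser of $P_{x,1}(H)$ equals that of $P_{x,1}(H)'$ is fine element-wise inside $H$, but conjugation need not preserve the chosen complement $P_{x,1}(H)'$, and centralising the \emph{image} of $P_{x,1}(H)'$ in $\gr H$ is a strictly weaker condition than centralising it in $H$.

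Indeed, the equality $m_{\gr H}=m_H$ that you assert is false in general. Let $\operatorname{char}k=p>0$ and let $H$ be generated by a group-like $x^{\pm1}$ and an element $h$ with $\Delta(h)=h\otimes x+1\otimes h$ and $xhx^{-1}=h+(x-1)$; this is an Ore extension $k[x^{\pm1}][h;\delta]$ with $\delta(x)=x-x^2$, and one checks the relation is compatible with $\Delta$, $\epsilon$ and $S$. Then $x^jhx^{-j}=h+j(x-1)$, so $a_x=p$ and $m_H=p$, whereas $xh-hx\in H_0$ makes $\bar x$ and $\bar h$ commute in $\gr H$, so $m_{\gr H}=1$. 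This does not damage the corollary: all it needs is $m_{\gr H}\mid m_H<\infty$, which the salvaged inclusion of centralisers gives, and then Proposition \ref{coradprop} applied to $\gr H$ finishes exactly as you say. This is also what the paper does -- it treats the corollary as immediate from Proposition \ref{coradprop}, using only the inequality $m_{\gr H}\le m_H$ (asserted as ``clear'' in the proof of Theorem \ref{final main}), and it obtains $m_{\gr H}=m_H$ only later, in characteristic zero and under the extra hypothesis $|S_H|<\infty$, via the antipode (Theorem \ref{final main}). In characteristic zero your equality would in fact follow from $m_H<\infty$ alone, but only via the further observation that a nontrivial unipotent shear forces $a_x=\infty$ when $\operatorname{char}k=0$; as written, and over an arbitrary field (which is the setting of this corollary), the step fails.
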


\section{The antipode in characteristic zero}

We now consider what happens when we work over a field of characteristic $0$.

\begin{proposition}\label{0main}
 Let $H$ be a pointed Hopf $k$-algebra. Suppose $\operatorname{char}k=0$. If $m_{H}=\infty$ then $|S|=\infty$. If $m_{H}<\infty$ then either $|S|$ divides ${2m_{H}}$, or there exists $h\in H$ such that the orbit of $S$ on $h$ is infinite. In particular, either $|S|$ divides $2m_{H}$ or $|S|=\infty$.
 
\end{proposition}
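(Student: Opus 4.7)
The plan is to deduce the proposition from a single observation: in characteristic zero, a nontrivial unipotent linear operator has infinite order.

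The first sentence is immediate from Proposition~\ref{infinite order}, so I would focus on the case $m_H < \infty$. Set $T := S^{2m_H}$. By Proposition~\ref{153}(2), $(T - \operatorname{id})^n(H_n) = 0$ for every $n \geq 1$; in other words, $T - \operatorname{id}$ is locally nilpotent along the coradical filtration. If $T = \operatorname{id}$, then $|S|$ divides $2m_H$ and we are done.

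Otherwise, pick $h \in H_n$ with $T(h) \neq h$ and write $N := T - \operatorname{id}$, so $N(h) \neq 0$ and $N^n(h) = 0$. Let $r$ (with $1 \leq r \leq n-1$) be maximal subject to $N^r(h) \neq 0$; a standard argument shows the vectors $h, N(h), \ldots, N^r(h)$ are linearly independent in $H$. Since $N^{r+1}(h) = 0$, the binomial expansion
\[
T^i(h) \;=\; (\operatorname{id} + N)^i(h) \;=\; \sum_{j=0}^{r} \binom{i}{j}\, N^j(h)
\]
is a finite sum for every $i \geq 0$. Because $\operatorname{char} k = 0$, the coefficient $\binom{i}{r}$, viewed in $k$, is a nonzero polynomial in $i$ of degree $r \geq 1$, and hence takes infinitely many distinct values as $i$ ranges over $\mathbb{Z}_{\geq 0}$. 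Combined with the linear independence of the $N^j(h)$, this forces $T^i(h) \neq T^{i'}(h)$ whenever $\binom{i}{r} \neq \binom{i'}{r}$, so $\{T^i(h) : i \geq 0\}$, and \emph{a fortiori} the $S$-orbit of $h$, is infinite.

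The ``in particular'' clause then follows at once: if $|S| < \infty$ then every element has finite $S$-orbit, so we must be in the first case and $|S|$ divides $2m_H$. The only subtle point is that $H_n$ may be infinite-dimensional, so I would resist the temptation to diagonalise $T$ on all of $H_n$ and instead work with a single element and its $N$-translates; this is what lets the characteristic-zero hypothesis do its work cleanly through the binomials $\binom{i}{r}$. In positive characteristic the same argument breaks because $\binom{p^l}{j} \equiv 0$ in $k$ for $0 < j < p^l$, which is precisely what necessitates the extra factor of $p^l$ in Theorem~\ref{anti main}(4).
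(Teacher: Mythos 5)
Your proof is correct, and it takes a mildly but genuinely different route from the paper's. The paper never uses Proposition \ref{153}(2) here: it chooses $n$ \emph{minimal} such that $T:=S^{2m_{H}}$ moves some $h\in H_{n}$, so that the error term $r=T(h)-h\in H_{n-1}$ is itself fixed by $T$, and an easy induction (Claim \ref{claim2}) gives $T^{t}(h)=h+tr$; since $\operatorname{char}k=0$ and $r\neq 0$, the orbit is an explicit infinite arithmetic progression. In effect the minimality reduction places the paper in the depth-one case $N^{2}(h)=0$ of your setup. You instead invoke the full local unipotency $(T-\operatorname{id})^{n}(H_{n})=0$ from Proposition \ref{153}(2) and run the general Jordan-block computation $T^{i}(h)=\sum_{j=0}^{r}\binom{i}{j}N^{j}(h)$, paying for the absence of the minimality trick with the standard (and correctly used) linear independence of $h, N(h),\dots,N^{r}(h)$; since $\binom{i}{r}$ takes infinitely many values in $k$ when $\operatorname{char}k=0$, the $T$-orbit, hence the $S$-orbit (as $T^{i}=S^{2m_{H}i}$, so the former is contained in the latter --- your ``a fortiori'' is right), is infinite. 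Both arguments rest on the same observation, that a nontrivial unipotent operator in characteristic zero has infinite order, but yours isolates it as a self-contained linear-algebra fact and avoids the minimal-counterexample bookkeeping, at the cost of needing the independence lemma; and your closing remark that $\binom{p^{l}}{j}\equiv 0$ in characteristic $p$ for $0<j<p^{l}$ pinpoints exactly why the same expansion there yields $S^{2m_{H}p^{l}}=\operatorname{id}$ instead of a contradiction, which is precisely the mechanism of the paper's Proposition \ref{pmain}.
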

\begin{proof}




We can, without loss of generality, assume that $m_{H}<\infty$, since otherwise Proposition \ref{infinite order} guarantees that $|S| =\infty$. Suppose $S^{2m_{H}}\neq \operatorname{id}$, and choose $n$ minimal such that $h\in H_{n}$ and $S^{2m_{H}}(h)\neq h$. We shall prove that $S^{2l}(h)\neq h$ for any $l\geq 1$. By Proposition \ref{153}(1), $S^{2m_{H}}(h) = h+r$ for some $r\in H_{n-1}$. By the choice of $h$, we can assume $r\neq 0$.

\begin{claim}\label{claim2}
Retain the above notation. For $t\geq 1$, $S^{2m_{H}t}(h)=h+tr$.
\end{claim}
\begin{proof}\textit{of Claim \ref{claim2}}:
We proceed by induction on $t\geq 1$. The $t=1$ case was Proposition \ref{153}(1). Fix $t\geq 1$. Then, by Proposition \ref{153}(1),
\begin{equation}\label{0}
S^{2(t+1)m_{H}}(h)=S^{2tm_{H}}S^{2m_{H}}(h)=S^{2tm_{H}}(h)+S^{2tm_{H}}(r).
\end{equation}
By the minimality of $n$, $S^{2tm_{H}}(r)=r$. By the inductive hypothesis,  equation ($\ref{0}$) becomes
\[
S^{2(t+1)m_{H}}(h)=(h+tr)+r=h+(t+1)r
\]
proving the claim by induction.

\end{proof}

So, $S^{2m_{H}t}(h)=h+tr$ for all $t\geq 1$. Since $\operatorname{char}k=0$, this implies $S^{2m_{H}t}(h)\neq h$ for all $t\geq 1$. Thus $|S^{2m_{H}}|=\infty$, and so $|S|=\infty$. In particular, if $i$ and $j$ are distinct integers, then $S^{i}(h)\neq S^{j}(h)$, since otherwise $S^{2m_{H}(i-j)}(h)=h$, contradicting the above. This completes the proof.


\end{proof}

\begin{theorem}\label{normal main}
Let $H$ be a pointed Hopf $k$-algebra, where  $\operatorname{char}k = 0$. If $|S|<\infty$ then either $|S| = 2m_{H}$ or $S=\operatorname{id}$.
\end{theorem}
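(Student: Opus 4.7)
The plan is to reduce Theorem \ref{normal main} to a combination of two results already established in the excerpt: Proposition \ref{0main}, which in characteristic zero gives divisibility of $|S|$ by (a divisor of) $2m_H$ once $|S|$ is finite, and the sharpness argument used inside the proof of Proposition \ref{coradprop}(2), which rules out proper divisors of $2m_H$.

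First I would observe that if $|S|<\infty$ then by Proposition \ref{infinite order} we must have $m_H<\infty$. Proposition \ref{0main} then yields $|S|\mid 2m_H$. By Lemma \ref{always even}, either $S=\operatorname{id}$ (and we are done) or $|S|=2q$ for some $q\ge 1$ with $q\mid m_H$. So the theorem reduces to showing that $q<m_H$ is impossible.

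For that reduction, suppose for contradiction $q<m_H$. For every $x\in G(H)$ and every $h\in P_{x,1}(H)$, Lemma \ref{claim1}(1) gives
\[
h \;=\; S^{2q}(h) \;=\; x^{q}hx^{-q},
\]
so that $x^q$ lies in $C_{\langle x\rangle}(P_{x,1}(H))$. Since this centraliser is a subgroup of the cyclic group $\langle x\rangle$ of index exactly $a_x$, the condition $x^q\in C_{\langle x\rangle}(P_{x,1}(H))$ is equivalent to $a_x\mid q$. Taking the least common multiple over $x\in G(H)$ gives $m_H\mid q$, contradicting $q<m_H$. Hence $q=m_H$ and $|S|=2m_H$, completing the proof.

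There is essentially no obstacle beyond assembling the pieces: the only mildly delicate point is the equivalence $x^q\in C_{\langle x\rangle}(P_{x,1}(H)) \iff a_x\mid q$, which follows because every subgroup of the cyclic group $\langle x\rangle$ of finite index $a_x$ is generated by $x^{a_x}$ (and has infinite index otherwise, in which case no such $q$ exists anyway). This is the characteristic-zero analogue of the argument used in Proposition \ref{coradprop}(2), with the coradically graded hypothesis replaced by Proposition \ref{0main}.
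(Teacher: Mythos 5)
Your proposal is correct and follows essentially the same route as the paper's proof: Proposition \ref{0main} and Lemma \ref{always even} reduce the problem to ruling out $|S|=2q$ with $q<m_{H}$, and Lemma \ref{claim1}(1) supplies the contradiction via the definition of $m_{H}$. The only difference is that you spell out the step the paper dismisses with ``by definition'' --- namely that $x^{q}\in C_{\langle x\rangle}(P_{x,1}(H))$ for all $x$ forces $a_{x}\mid q$ and hence $m_{H}\mid q$ --- which is a harmless (indeed welcome) elaboration, not a departure in method.
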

\begin{proof}
Suppose $S\neq \operatorname{id}$. By Proposition \ref{0main} and Lemma \ref{always even}, it suffices to show that if $|S|<\infty$, then, for any $q<m_{H}$, there exists $h\in H$ such that $S^{2q}(h)\neq h$.

Suppose for a contradiction that there exists some $q<m_{H}$ such that $|S| = 2q$. By Lemma \ref{claim1} (1), for any $x\in G$, $h\in P_{x,1}(H)$,
$$h  = S^{2m_{H}}(h) = x^{q}hx^{-q}.$$
However, since $q<m_{H}$, by definition there exists some $y\in G$ and $f\in P_{y,1}(H)$ such that
$$f \neq y^{q}fy^{-q},$$
a contradiction. This completes the proof.
\end{proof}

Combining Corollary \ref{assoco} and Theorem \ref{normal main}, we prove the following result which connects the order of the antipode of a pointed Hopf algebra $H$ to the order of the antipode of the associated graded Hopf algebra, $\gr{H}$. The proof is essentially exactly the same as the proof of Proposition \ref{0main}.

\begin{theorem}\label{final main}
Let $H$ be a pointed Hopf $k$-algebra, where $\operatorname{char}k=0$. If $|S_{H}|<\infty$ then  $|S_{H}|=|S_{\gr{H}}|.$
\end{theorem}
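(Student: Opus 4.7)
The plan is to run the same argument as in Proposition~\ref{0main}, with $|S_{\gr H}|$ playing the role that $2m_H$ plays there. First I would record that $S$, being a coalgebra anti-homomorphism, preserves the coradical filtration of $H$ and so induces the antipode $\bar S$ on $\gr H$; moreover, passage to the associated graded is functorial on compositions of coalgebra maps, so $\overline{S^n}=\bar S^{\,n}$ for every $n$. Setting $d:=|S_H|$ and $d':=|S_{\gr H}|$, the relation $S^d=\operatorname{id}$ then forces $\bar S^{\,d}=\operatorname{id}$, giving $d'\mid d$. The theorem therefore reduces to excluding $d'<d$; the case $S_H=\operatorname{id}$ is trivial, since it immediately yields $\bar S=\operatorname{id}$ and so $d=1=d'$.

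Assume for contradiction that $d'<d$. Then $S^{d'}$ is not the identity on $H$, but $\bar S^{\,d'}=\operatorname{id}$ translates into $(S^{d'}-\operatorname{id})(H_n)\subseteq H_{n-1}$ for every $n\geq 0$; this is exactly the input that Proposition~\ref{153}(1) supplies inside the proof of Proposition~\ref{0main}. I would then choose $h\in H_n\setminus H_{n-1}$ of minimal coradical filtration degree $n$ with $S^{d'}(h)\neq h$ (necessarily $n\geq 1$, because $\bar S^{\,d'}=\operatorname{id}$ forces $S^{d'}$ to be the identity on $H_0$), write $S^{d'}(h)=h+r$ with $0\neq r\in H_{n-1}$, use the minimality of $n$ to deduce $S^{d'}(r)=r$, and induct on $t$ to obtain $S^{d' t}(h)=h+tr$ for every $t\geq 1$. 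The characteristic zero hypothesis then yields $tr\neq 0$, so $S^{d't}(h)\neq h$ for every $t\geq 1$; thus $|S^{d'}|=\infty$ and hence $|S_H|=\infty$, contradicting $|S_H|=d<\infty$.

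Conceptually, the argument is a verbatim transcription of the proof of Proposition~\ref{0main}; the only substitution is to replace the appeal to Proposition~\ref{153}(1) with the weaker ingredient $(S^{d'}-\operatorname{id})(H_n)\subseteq H_{n-1}$ coming directly from $\bar S^{\,d'}=\operatorname{id}$. I do not anticipate a serious obstacle. The one point deserving explicit care is verifying $\overline{S^n}=\bar S^{\,n}$, which is immediate from the functoriality of $\gr$ on coalgebra maps.
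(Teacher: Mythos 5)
Your proof is correct, and although its engine --- the induction $S^{d't}(h)=h+tr$ on a minimal-degree witness, killed off by characteristic zero --- is the same Claim \ref{claim2} computation the paper reuses, your surrounding logic is genuinely different from the paper's. The paper routes everything through $m_H$: it invokes Proposition \ref{infinite order} to get $m_H<\infty$, Corollary \ref{assoco} to identify $|S_{\gr H}|=2m_{\gr H}$, runs the induction to conclude $|S_H|$ divides $2m_{\gr H}$, and then appeals to Theorem \ref{normal main} (giving $|S_H|=2m_H$) together with the asserted inequality $m_{\gr H}\leq m_H$ to force $m_{\gr H}=m_H$. You instead prove $d'\mid d$ directly from $\overline{S^n}=\overline{S}^{\,n}$ (legitimate, since the coradical filtration of a pointed Hopf algebra is a Hopf filtration, so $S$ is filtration-preserving and $\gr$ is functorial on such maps), and rule out $d'<d$ by applying the induction to $S^{d'}$ itself, with the containment $(S^{d'}-\operatorname{id})(H_n)\subseteq H_{n-1}$ --- which is precisely the content of $\overline{S}^{\,d'}=\operatorname{id}$ --- replacing the appeal to Proposition \ref{153}(1). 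This buys you two things: you need neither Theorem \ref{normal main} nor Corollary \ref{assoco} (indeed $m_H$ never appears), and you sidestep the paper's ``clearly $m_{\gr H}\leq m_H$'' step, which is asserted without proof and in fact requires a small verification about degree-one skew-primitives of $\gr H$. What your route does not deliver is the extra information the paper extracts along the way, namely $m_{\gr H}=m_H$ and the identification of the common order with $2m_H$, which is needed for the full statement of Theorem \ref{anti main}(3). The details you flag all check out: the minimal witness has degree $n\geq 1$ because $\gr H$ in degree zero is $H_0$ itself, and the final contradiction is immediate since $S^{d'd}=(S^{d})^{d'}=\operatorname{id}$ while your formula yields $S^{d'd}(h)=h+dr\neq h$.
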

\begin{proof}
Suppose $S\neq{id}$. If $|S_{H}|<\infty$ then $m_{H}<\infty$ by Proposition \ref{infinite order}. Let $S$ denote the antipode of $H$, let $\overline{S}$ denote the antipode of $\gr{H}$ and let $l=m_{\gr{H}}$. By Corollary \ref{assoco} then it follows that $l<\infty$ and $|\overline{S}|=2l$. Let $n\geq 0$, $h\in H_{n}$ and let $\overline{h}=h+H_{n-1}\in \gr{H}$. Then $\overline{S}^{2l}(\overline{h})=\overline{h}$ and so
$$S^{2l}(h) = h+r$$
for some $r\in H_{n-1}$. Then, by exactly the same proof as the one given for Claim \ref{claim2} above, we get that
$$S^{2lt}(h)=h+tr$$
for each $t\geq 1$. If $r\neq 0$ then, since $\operatorname{char}{k}=0$, $|S^{2l}|=\infty$ and hence $|S|=\infty$. However we assumed $|S|<\infty$, so we must have $r=0$. Thus we have $S^{2l}=\operatorname{id}$ and so $|S|$ divides $2l$. However, $|S|=2m_{H}$ by Theorem $\ref{normal main}$, which implies that $m_{H}$ divides $l$. Clearly $l=m_{\gr H} \leq m_{H}$ in general, and so it must be that $m_{\gr{H}} = m_{H}$. The result follows.

\end{proof}

\begin{corollary}\label{summary}
Let $H$ be a pointed Hopf $k$-algebra. Assume $\operatorname{char}k=0$ . Then
\begin{enumerate}
\item{If $G(H)$ is central in $H$, then either $|S|=\infty, |S|= 2$ or $S=\operatorname{id}$.}
\item{
If $H$ is a connected Hopf algebra (i.e. $G(H)=\{1\}$), then either $|S|=\infty, |S|=2$ or $S=\operatorname{id}$.}
\end{enumerate}
\end{corollary}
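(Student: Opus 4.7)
The plan is to reduce both assertions to a direct application of Theorem \ref{normal main} once we have verified that $m_H = 1$ in each of the two settings. The key observation is that Proposition \ref{central} already carries out the relevant computation of $m_H$ whenever $G(H)$ is central in $H$, so essentially no further work is required.

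For part (1), assume $G(H)$ is central in $H$. Proposition \ref{central}(1) immediately yields $m_H = 1$. Now apply Theorem \ref{normal main}: if $|S| < \infty$, then either $S = \operatorname{id}$ or $|S| = 2 m_H = 2$. The remaining alternative $|S| = \infty$ completes the trichotomy.

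For part (2), observe that if $H$ is connected then $G(H) = \{1\}$, and the trivial subgroup is trivially central in $H$; hence part (2) is a direct specialisation of part (1). Alternatively, one can argue straight from the definitions: the only element $x \in G(H)$ is $x = 1$, so $\langle x \rangle = \{1\}$ and $C_{\langle x\rangle}(P_{x,1}(H)) = \langle x\rangle$, giving $a_1 = 1$ and therefore $m_H = 1$; Theorem \ref{normal main} then finishes the argument exactly as in (1).

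There is essentially no obstacle here: once Proposition \ref{central} and Theorem \ref{normal main} are in hand, the corollary is a two-line consequence. The only point deserving a moment's thought is that the hypothesis of Proposition \ref{central}(1) is genuinely satisfied in both scenarios, which in each case is immediate from the stated assumptions on $G(H)$.
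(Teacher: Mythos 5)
Your proposal is correct and takes essentially the same route as the paper: both reduce to $m_H = 1$ when $G(H)$ is central (Proposition \ref{central}(1)) and then invoke the characteristic-zero result, with part (2) handled as a special case of part (1). The only cosmetic difference is that you cite Theorem \ref{normal main} where the paper cites Proposition \ref{0main}; either yields the same trichotomy.
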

\begin{proof}
Clearly if $G(H)$ is central in $H$ then $m_{H}=1$, so (1) follows from Proposition \ref{0main}. Part (2) is immediate from (1).

\end{proof}

\begin{remark}\label{connected}
Note that in Theorem \ref{normal main} all alternatives can occur, even in the case where $H$ is connected, where $m_{H}=1$ always. It is noted in \cite[\S{3.5}]{QHom} that the connected Hopf algebra $B(\lambda)$ introduced by Zhuang in \cite[\S{7}]{Zhuang} has an antipode of infinite order.
\end{remark}

\section{The antipode in positive characteristic}

In \cite[\textrm{Corollary 6}]{Taft2}, a bound is obtained for the order of the antipode of a finite dimensional Hopf algebra $H$ over a field of positive characteristic in terms of the exponent of $G(H)$. It turns out that the proof of that result is equally valid if we drop the assumption that $H$ is finite dimensional, and also that, in light of the results of $\S\ref{m sec}$, the result can be restated in terms of $m_{H}$, rather than the exponent of $G(H)$. As such, we again credit the results of this section to Taft and Wilson.

\begin{proposition}\label{pmain}
Let $H$ be a pointed Hopf $k$-algebra with $m_{H}<\infty$. Let $n\geq1$ and $h\in H_{n}$. Choose $l\in\mathbb{N}$ such that $p^{l}\geq n \geq p^{l-1}$. If $\operatorname{char}k=p>0$, $S^{2m_{H}p^{l}}(h)=h$.
\end{proposition}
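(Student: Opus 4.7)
The plan is to combine Proposition \ref{153}(2) with the characteristic-$p$ Frobenius identity (the freshman's dream) applied to commuting operators on $H$.

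First I would invoke Proposition \ref{153}(2), which gives that $(S^{2m_H}-\operatorname{id})^{n}(H_{n})=0$ for any $n\geq 1$. Writing $T:=S^{2m_{H}}$, this says $(T-\operatorname{id})^{n}$ annihilates $H_{n}$. Since $p^{l}\geq n$ by hypothesis, it follows immediately that $(T-\operatorname{id})^{p^{l}}$ also annihilates $H_{n}$, i.e.\ $(T-\operatorname{id})^{p^{l}}(h)=0$ for every $h\in H_{n}$.

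Next I would pass from $(T-\operatorname{id})^{p^{l}}$ to $T^{p^{l}}-\operatorname{id}$. The operators $T$ and $\operatorname{id}$ commute in the $k$-algebra $\operatorname{End}_{k}(H)$, which has characteristic $p$, so the freshman's dream applies:
$$(T-\operatorname{id})^{p^{l}}=T^{p^{l}}+(-\operatorname{id})^{p^{l}}=T^{p^{l}}-\operatorname{id},$$
where the last equality holds in both the $p$ odd case (where $(-1)^{p^{l}}=-1$) and the $p=2$ case (where $-1=1$). Substituting $T=S^{2m_{H}}$ gives $(S^{2m_{H}p^{l}}-\operatorname{id})(h)=0$, which is exactly the desired conclusion $S^{2m_{H}p^{l}}(h)=h$.

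There is no real obstacle here; the argument is essentially an observation. The only point requiring any care is the commutativity of $T$ and $\operatorname{id}$ in $\operatorname{End}_{k}(H)$ needed to justify the binomial expansion, and noting that we do not require $T$ to be an algebra morphism for this step (merely a $k$-linear endomorphism), so the argument goes through over any field of characteristic $p$.
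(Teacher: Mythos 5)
Your proof is correct and takes essentially the same route as the paper's: both invoke Proposition \ref{153}(2) to see that $(S^{2m_{H}}-\operatorname{id})^{p^{l}}$ annihilates $H_{n}$ and then apply the binomial theorem in characteristic $p$, justified by the commutativity of $S^{2m_{H}}$ and $\operatorname{id}$ in $\operatorname{End}_{k}(H)$, to obtain $S^{2m_{H}p^{l}}(h)=h$. Your additional remarks (raising the exponent from $n$ to $p^{l}$, and the sign $(-\operatorname{id})^{p^{l}}$ when $p=2$) only make explicit details the paper leaves implicit.
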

\begin{proof}
Let $n\geq1$ and $h\in H_{n}$. Choose $l\in\mathbb{N}$ such that $p^{l}\geq n \geq p^{l-1}$. By Proposition Lemma \ref{153}(2),
$$0 = (S^{2m_{H}}-\operatorname{id})^{p^{l}}(h)=S^{2m_{H}p^{l}}(h) - h.$$
where the final equality follows from the binomial theorem in characteristic $p>0$, which works here since $S$ and $\operatorname{id}$ commute in $\operatorname{End}_{k}(H)$. The result follows.
\end{proof}

The following corollary is immediate.

\begin{corollary}\label{pmain2}
Let $H$ be a pointed Hopf $k$-algebra with $m_{H}<\infty$. Assume $\operatorname{char}k=p>0$.
\begin{enumerate}
\item{
Suppose that $H$ can be generated as an algebra by $H_{n}$ for some $0\leq n <\infty$. Choose $l\in\mathbb{N}$ such that $p^{l}\geq n \geq p^{l-1}$. Then $|S|$ divides $2m_{H}p^{l}$.}
\item{
If $H$ is affine (that is, finitely generated as an algebra), then $|S|$ divides $2m_{H}p^{n}$ for some $0\leq n <\infty$. In particular, $|S|<\infty$.}
\end{enumerate}

\end{corollary}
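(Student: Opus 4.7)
The proof proposal is essentially a two-line reduction to Proposition \ref{pmain}, since both parts amount to propagating a fact established on $H_{n}$ to the whole algebra by using that a sufficiently high even power of the antipode is an algebra morphism.

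For part (1), the plan is to fix $n \geq 0$ such that $H$ is generated as a $k$-algebra by $H_{n}$, and choose $l \in \mathbb{N}$ with $p^{l} \geq n \geq p^{l-1}$. By Proposition \ref{pmain}, applied to each element of $H_{n}$, the map $S^{2m_{H}p^{l}}$ restricts to the identity on $H_{n}$. Since $S$ is an anti-algebra morphism, its even powers are algebra morphisms; in particular $S^{2m_{H}p^{l}}$ is an algebra endomorphism of $H$ that agrees with the identity on a generating subspace. Hence $S^{2m_{H}p^{l}} = \operatorname{id}$, so $|S|$ divides $2m_{H}p^{l}$.

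For part (2), assume $H$ is generated as an algebra by finitely many elements $x_{1}, \dots, x_{r}$. Because the coradical filtration is exhaustive, each $x_{i}$ lies in some $H_{n_{i}}$, and taking $n := \max_{i} n_{i}$ gives $\{x_{1}, \dots, x_{r}\} \subseteq H_{n}$. Thus $H$ is generated as an algebra by $H_{n}$, and part (1) applies to furnish an $l$ with $|S|$ dividing $2m_{H}p^{l}$; in particular $|S| < \infty$.

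No serious obstacle is anticipated: the only mildly subtle point to spell out is that $S^{2m_{H}p^{l}}$ is a genuine algebra morphism (not an anti-morphism), which is why it suffices to check it on algebra generators. Everything else is bookkeeping with the filtration and a direct appeal to Proposition \ref{pmain}.
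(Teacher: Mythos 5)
Your proof is correct and is exactly the argument the paper leaves implicit when it calls the corollary ``immediate'' from Proposition \ref{pmain}: apply that proposition on $H_{n}$, note that $S^{2m_{H}p^{l}}$ is an algebra morphism agreeing with the identity on algebra generators, and for part (2) use exhaustiveness of the coradical filtration to place finitely many generators in some $H_{n}$. Your remark that even powers of $S$ are genuine algebra morphisms (not anti-morphisms) is precisely the point that makes checking on generators suffice, so nothing is missing.
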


\begin{corollary}\label{finally}
Let $H$ be an affine pointed Hopf $k$-algebra, where $\operatorname{char}k=p>0$.
\begin{enumerate}
\item{
Suppose $G(H)$ is central in $H$. Then there exists some $n\geq 0$ such that $|S|$ divides $2p^{n}$.}
\item{
Suppose $H$ is connected. Then there exists some $n\geq 0$ such that $|S|$ divides $2p^{n}$.}
\end{enumerate}
\end{corollary}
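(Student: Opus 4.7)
The plan is to observe that both parts of Corollary \ref{finally} follow essentially immediately from Corollary \ref{pmain2}(2) combined with Proposition \ref{central}, so no substantive new argument is required; the main work has already been done.

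For part (1), I would first invoke Proposition \ref{central}(1): since $G(H)$ is assumed to be central in $H$, every element of $G(H)$ centralises every skew-primitive space $P_{x,1}(H)$, so $a_x = 1$ for each $x \in G(H)$ and hence $m_H = 1$. In particular, $m_H < \infty$, so the hypotheses of Corollary \ref{pmain2}(2) are satisfied (using that $H$ is affine by assumption). Applying that corollary, there exists $n \geq 0$ such that $|S|$ divides $2 m_H p^n = 2 p^n$, which is what we want.

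For part (2), the assumption that $H$ is connected means $G(H) = \{1\}$, and the trivial subgroup is automatically central in $H$. Thus the hypothesis of part (1) is satisfied, and part (2) follows from part (1) without further work.

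There is no real obstacle here; the only point worth flagging is that part (2) is not quite a logical tautology from part (1) until one notes that $G(H) = \{1\}$ counts as being central, and that $H$ being connected (together with affine) still falls under the affine hypothesis needed for Corollary \ref{pmain2}(2). Once these minor observations are made the proof reduces to a single citation chain.
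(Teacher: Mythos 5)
Your proposal is correct and follows exactly the paper's own argument: the paper likewise derives part (1) by combining Corollary \ref{pmain2} with the observation (Proposition \ref{central}(1)) that centrality of $G(H)$ forces $m_{H}=1$, and treats part (2) as the special case $G(H)=\{1\}$. Your extra remarks about the trivial group being central and the affine hypothesis carrying over are fine clarifications of what the paper leaves implicit, but introduce nothing substantively different.
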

\begin{proof}
Part (1) follows immediately from the Corollary \ref{pmain2} and fact that if $G(H)$ is central in $H$ then $m_{H}=1$. Part (2) is a special case of (1).
\end{proof}

As shown by the following example, originally due to Taft and Wilson, \cite{Taft}, the bound on the order of the antipode obtained in Corollary $\ref{pmain2}$ is in general not attained, even in the finite-dimensional connected case.

\begin{example}\label{bad pos}
Let $k$ be a field with $\operatorname{char}k=p\geq 3$. Let $R$ be the algebra with generators $X, Y$ and $Z$ subject to the following relations.
\[
[X, Y] = X \quad [Y, Z] = -Z, \quad [X,Z] =\frac{1}{2}X^{2},
\]
\[
X^{p}=0, \quad Y^{p}=Y, \quad Z^{p}=0.
\]
It is proved in \cite{Taft} that $R$ is a connected Hopf algebra of vector space dimension $p^{3}$ with coproduct, counit and antipode defined on generators as follows:
\[
\epsilon(X)=0, \quad \Delta(X)=1\otimes X+X\otimes 1,
\]
\[
\epsilon(Y)=0, \quad \Delta(Y)=1\otimes Y+Y\otimes 1,
\]
\[
\epsilon(Z)=0, \quad \Delta(Z)=1\otimes Z+X\otimes Y+Z\otimes 1,
\]
\[
S(X)=-X,\quad S(Y)=-Y, \quad S(Z)=-Z+XY.
\]
Since $R$ is connected, $m_{R}=1$. Morover, notice that $X, Y\in R_{1}$ and $Z\in R_{2}$, so $R$ can be generated in at least coradical degree $2$. If $R$ was generated in coradical degree one, it would be cocommutative, since  $R$ being connected implies $R_{1}=k\oplus P(R)$, \cite[\textrm{Lemma 5.3.2}]{Mont}. Since $p\geq 3$, the bound on the order of the antipode of $R$ as determined by
Corollary \ref{pmain2} is therefore $2p$. A simple calculation  yields the identity
$$S^{2t}(Z) = Z -tX$$
for any $t\geq 1$. In particular, $S^{2p}(Z)=Z$. Since $S$ is an anti-algebra morphism, it follows  that $|S|=2p$.
\end{example}  

\begin{example}
For an example of a pointed Hopf $k$-algebra $H$ over a field of positive characteristic which has an antipode of infinite order, see Example \ref{borel}:  when $q$ is not a root of unity, the Hopf algebra $H=U_{q}(\mathfrak{b}^{+})$ has an antipode of infinite order over \emph{any} field. 
\end{example}

We know of no example of a connected Hopf algebra $H$ in positive characteristic  with an antipode of infinite order. This prompts the following question.

\begin{question}
Suppose $H$ is a connected Hopf $k$-algebra, where $\operatorname{char}k=p>0$. Does the antipode of $H$ always have finite order?
\end{question}

By Corollary \ref{finally}(2), an example which gives a negative answer to the above question would be necessarily non-affine. More generally, we could ask the following.

\begin{question}
Suppose $H$ is a pointed Hopf $k$-algebra, where $\operatorname{char}k=p>0$ and $m_{H}<\infty$. Does the antipode of $H$ always have finite order?
\end{question}

\end{document}